\newtheorem{theorem}{Theorem}[section]
\newtheorem{lemma}[theorem]{Lemma}
\newtheorem{proposition}[theorem]{Proposition} 
\theoremstyle{definition}
\theoremstyle{remark}
\newtheoremstyle{mystyle}{2mm}{0mm}{}{}{\bfseries}{}{1ex}{\thmnumber{#2}.\hspace*{1ex}\thmnote{#3}}
\theoremstyle{mystyle}
\newtheorem{fact}[theorem]{}
\newtheoremstyle{myremark}{2mm}{0mm}{}{}{\bfseries}{}{ }{\thmname{#1}
\thmnumber{#2}. \thmnote{ #3}}
\theoremstyle{myremark}
\newtheorem{remark}[theorem]{Remark}
\newcommand{\bs}[1]{\boldsymbol{#1}}
\def\Id{\mathrm{I}} 
\DeclareMathOperator{\Ker}{Ker} 
\DeclareMathOperator{\Ho}{H}
\DeclareMathOperator{\K}{K}
\DeclareMathOperator{\Tor}{Tor}
\newcommand\ot{\otimes}
\newcommand\op{\oplus}
\newcommand\kal{\mathscr}
\newcommand\dr{\mathrm}
\newcommand{\Ap}[1]{A_+^{(#1)}}
\newcommand{\br}[1]{\{ #1 \}}
\def\T3{\mathbb{T}_3}
\def\m{\mu}
\newcommand{\Od}[2]{\Omega_{#1}(#2)}
\newcommand{\N}{{\mathbb N}}
\newcommand{\pa}{\partial}
\DeclareMathAlphabet{\mathpzc}{OT1}{pzc}{m}{it}
\begin{document}
\title{On Koszulity of Finite Graded Posets}
\author{Adrian Manea}
\author{Drago\c{s} \c{S}tefan}
\address{University of Bucharest, Department of Mathematics, 14 Academiei Street, Bucharest Ro-010014, Romania}
\subjclass[2010]{Primary 16E40; Secondary 16T10 and 16T15}
\keywords{Koszul rings, Koszul corings, incidence (co)ring of a poset}
\date{}

\begin{abstract}
In this paper we continue our research on Koszul rings, started in \cite{jps}. In Theorem \ref{thm:kring} we prove in a unifying way several equivalent descriptions of Koszul rings, some of which being well known in the literature. Most of them are stated in terms of coring theoretical properties  of $\Tor_n^A(R,R)$. As an application of these characterizations we investigate the Koszulity of the incidence rings for finite graded posets, see Theorem \ref{te:aplicatie} and Theorem \ref{te:aplicatie_dual}. Based on these  results, we describe an algorithm to produce new classes of Koszul posets (i.e. graded posets whose incidence rings  are Koszul). Specific examples of Koszul posets are included.
\end{abstract}

\keywords{Koszul ring, incidence ring of a poset}
\subjclass[2010]{Primary 16E40; Secondary 16T15}
%% \linenumbers

\maketitle
  
%% main text
\section*{Introduction}
By the definition in \cite{bgs}, an $\mathbb{N}$-graded
ring $A:=\oplus_{n\in\mathbb{N}}A_n$ is said to be Koszul if and only if $A_{0}$ is a semisimple ring and there is a resolution $P_{\ast}$ of $A_0$ by projective
graded left $A$-modules such that each $P_{n}$ is
generated by homogeneous elements of degree $n$. 
Koszul rings are natural generalizations of Koszul algebras which, in turn, were discovered by Priddy \cite{Pr}.  Koszul algebras and rings have proved to be very useful tools in various fields of Mathematics, such as: Representation Theory, Algebraic Geometry, Algebraic Topology, Quantum Groups and Combinatorics; a comprehensive read is, for example, \cite{PP} and the references therein.

In this paper we continue our previous work on Koszul rings and their applications, our purpose being two-fold. First, we show that the Koszulity of a connected graded $R$-ring $A$ can be stated  in terms of coring theoretical properties of its connected graded coring $T(A):=\Tor_\ast^A(R,R)$.  This is mainly done  in Theorem \ref{thm:kring}, where we also recover in a unifying way other well-known equivalent characterizations of these rings. For instance, we show that $A$ is Koszul if and only if the coring $T(A)$ is strongly graded, if and only if the primitive part of $T(A)$ coincides with its homogeneous component of degree one, if and only if $T(A)$, regarded in a canonical way as a bigraded coring, is diagonal (for the last characterization see also \cite{wood}). 

To check Koszulity of a ring is a difficult task. The case of incidence rings of finite graded posets was approached in \cite[Theorem 3.7]{wood}, where it is proved that such a ring is Koszul if and only if all open intervals of the poset are Cohen-Macaulay. The Koszulity of incidence algebras is  investigated in \cite{polo, RS} as well. Nevertheless, the examples of Koszul posets are sporadic.   

Our second goal is to indicate an algorithmic method for producing new examples of Koszul posets.   This is done in the second section of our paper. To explain the main result of this part, let us introduce some notation. We fix a finite graded poset $\kal{P}$ and a maximal element $t\in \kal{P}$. We denote by $\kal{Q}$ the subposet  $\kal{Q}=\kal{P}\setminus\{t\}$. For the set of predecessors of $t$ in $\kal{P}$ we will use the notation $\kal{F}$. Let $B$ be the incidence ring of $\kal{Q}$.  In  Theorem \ref{te:aplicatie}, under mild combinatorial conditions on the set $\kal{F}$, we prove that $A$ is  Koszul if and only if $B$ is Koszul. A similar result is obtained in Theorem \ref{te:aplicatie_dual}, working with dual poset of $\kal{P}$.
Based on these two theorems we provide an algorithm to construct new examples of Koszul graded posets from given ones. 
We use this algorithmic method to distinguish several concrete types of Koszul posets, such as planar tilings and nested diamonds.

\section{Koszul rings}
In this section we first recall some basic concepts and notations and then we will prove some new preliminary results, which are needed later on.  The main result of this section is Theorem \ref{thm:kring}, where we characterize  Koszul rings.

\begin{fact}[Connected (co)rings.]                                                                                                                                                                                                                                                                                                                                                                                                                                                                                                                                                                                                                                                                                                                                                                                                                                                                                                                                                               Throughout, $R$ will denote a semisimple ring. Since we always work with algebras and coalgebras in the tensor category of $R$-bimodules, to ease the notation, the tensor product $\ot_R$ of two bimodules will be denoted by $\ot$. If $V$ is an $R$-bimodule, then we will use the notation $V^{(n)}$ for the $n$th tensor power $V\ot\cdots\ot V$. By convention we take $V^{(0)}=R$. In a similar way, for any bimodule morphism $f:M\to N$, the tensor product $f\ot\cdots\ot f$ with $n$ factors will be denoted by $f^{(n)}$. The identity morphism of $X$ will be denoted either by $\Id_X$, or simply $X$, if there is no danger of confusion.
	
By definition $A=\oplus_{n \in \mathbb{N}} \ A_n$ is a connected $R$-ring if and and only if it is an $\mathbb{N}$-graded algebra in the tensor category of $R$-bimodules  and  $A_0=R$. 
Since $A$ is graded, the multiplication $\mu:A\ot A\to A$ is defined by some maps $\mu_{p,q} : A_p \otimes A_q \rightarrow A_{p+q}$. We say that $A$ is \emph{strongly graded} if these maps are surjective, for all $p,\ q \geq 0$. Equivalently, $A$ is strongly graded if and only if the iterated multiplication $\m(n):A_1^{(n)}\to A_n$ is surjective for all $n\geq 2$.

Dually, a connected $R$-coring is a graded coalgebra  $C=\oplus_{n \in \mathbb{N}} C_n$, in the category of $R$ bimodules such that $C_0=R$. 
The comultiplication of a connected $R$-coring $C$ is defined by some $R$-bimodule maps $\Delta_{p,q} : C_{p+q} \rightarrow C_p \otimes C_q$ and the coring $C$ is called \emph{strongly graded} if all of them are injective.
	
Let $\Delta(1):=\Id_{C_1}$ and for $n\geq 2$ we define $\Delta(n) : C_n \rightarrow C_1^{(n)}$ by the recurrence relation:
\[
\Delta(n)=\big({C_1} \otimes \Delta(n-1)\big) \circ \Delta_{1,n-1}.
\]
From the very definition of the maps $\Delta(n)$ and coassociativity, we get the equality:
\begin{equation}\label{eq:Delta(n)}
 \Delta(p+q)=\big(\Delta(p) \otimes \Delta(q)\big) \circ \Delta_{p,q}.
\end{equation}
Note that $C$ being strongly graded can be put in terms of $\Delta(n)$ being injective for all $n$. Equivalently, $C$ is strongly graded if and only if $\Delta_{1,n}$ is injective for all $n$, if and only if $\Delta_{n,1}$ in injective for all $n$.
	
Let $C$ be a connected $R$-coring, so $C_0=R$. By definition of connected corings the unit of $R$ is a group-like element, that is $\Delta(1)=1\ot 1$, see \cite{jps}.  Hence it makes sense to speak about the  primitive elements of $C$, i.e. about those $c \in C$ such that $\Delta(c)=c\otimes 1 + 1 \otimes c$.  The set of primitive elements in $C$ includes $C_1$ and it will be denoted by $P(C)$. In general, the inclusion $C_1\subseteq P(C)$ is strict. 
\end{fact}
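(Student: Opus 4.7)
The fact gathers several assertions: the ring-side characterization of strong gradedness via surjectivity of $\m(n)$; the recursion \eqref{eq:Delta(n)}; and the coring-side equivalences between $C$ being strongly graded, $\Delta(n)$ being injective for all $n$, $\Delta_{1,n}$ being injective for all $n$, and $\Delta_{n,1}$ being injective for all $n$. The plan is to dispose of the ring statement by a direct induction, then establish \eqref{eq:Delta(n)} (the technical heart), and finally leverage it for the coring equivalences.

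For the ring side I would iterate the recurrence $\m(n)=\mu_{1,n-1}\circ (A_1\ot \m(n-1))$. If every $\mu_{p,q}$ is surjective, induction immediately yields surjectivity of $\m(n)$. Conversely, by induction one checks $\m(n)=\mu_{p,n-p}\circ (\m(p)\ot \m(n-p))$, so surjectivity of $\m(n)$ forces surjectivity of $\mu_{p,n-p}$ for every splitting of $n$.

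For \eqref{eq:Delta(n)} I would induct on $p$. The base case $p=1$ reduces to the defining recurrence since $\Delta(1)=\Id_{C_1}$. For the inductive step I would expand
\[
\Delta(p+1+q)=(C_1\ot \Delta(p+q))\circ \Delta_{1,p+q},
\]
apply the inductive hypothesis to $\Delta(p+q)$, and then reorganize using coassociativity in the form $(C_1\ot \Delta_{p,q})\circ \Delta_{1,p+q}=(\Delta_{1,p}\ot C_q)\circ \Delta_{p+1,q}$. After regrouping, the left tensor factor becomes $(C_1\ot \Delta(p))\circ \Delta_{1,p}=\Delta(p+1)$, which closes the induction.

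Once \eqref{eq:Delta(n)} is in hand, the coring equivalences are formal. Strong gradedness makes each $\Delta_{p,q}$ injective, so $\Delta(n)$ is injective as a composition of injectives. Conversely, \eqref{eq:Delta(n)} presents $\Delta(p+q)$ as the composition of $\Delta_{p,q}$ with $\Delta(p)\ot \Delta(q)$, so injectivity of $\Delta(p+q)$ forces injectivity of $\Delta_{p,q}$. The equivalence with injectivity of only $\Delta_{1,n}$ follows from the defining recurrence for $\Delta(n)$; the equivalence with injectivity of only $\Delta_{n,1}$ follows from the symmetric recurrence $\Delta(n)=(\Delta(n-1)\ot C_1)\circ \Delta_{n-1,1}$, which is the $q=1$ specialization of \eqref{eq:Delta(n)}. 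The only genuine obstacle is the coassociativity bookkeeping in the inductive step for \eqref{eq:Delta(n)}; once the tensor factors are lined up correctly, everything else is a routine chase.
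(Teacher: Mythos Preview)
Your argument is correct. The paper itself does not supply a proof for this block: it states that \eqref{eq:Delta(n)} follows ``from the very definition of the maps $\Delta(n)$ and coassociativity'' and asserts the equivalences without further comment, so your induction on $p$ for \eqref{eq:Delta(n)} and the subsequent formal deductions are precisely the routine verification the paper leaves implicit. The only point worth flagging is that when you pass from injectivity of $\Delta(n-1)$ to injectivity of $C_1\otimes \Delta(n-1)$ (and similarly for $\Delta(p)\otimes\Delta(q)$) you are tacitly using that tensoring over $R$ preserves monomorphisms, which holds here because $R$ is semisimple; you may want to say this explicitly.
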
 

\begin{fact}[The $R$-coring $A^!$.]\label{fact:shrieck}
For an $R$-bimodule $V$, the tensor $R$-coring is the graded $R$-bimodule whose component of degree $n$ is $T_R^c(V)_{n}=V^{\ot n}$.  The comultiplication of   $T_R^c(V)$ is defined by the family $\{\Delta_{p,q}\}_{p,q\in\N}$, where:  
\[                                                                                                                                                                                                          
\Delta_{p,q}:T_R^c(V)_{p+q}\to T_R^c(V)_p\ot T_R^c(V)_q, \quad \Delta_{p,q}(v_1\ot\cdots \ot v_{p+q})=(v_1\ot\cdots\ot v_p)\ot (v_{p+1}\ot\cdots\ot  v_{p+q}).                                                                                                                                                                                                                                           \]
For a sub-bimodule $W\subseteq V\ot V$, we define  a graded $R$-module $\br{V,W}=\oplus_{n\in\N}\br{V,W}_n$ by taking $\br{V,W}_0=R$ and  $\br{V,W}_1=V$. For all $n \geq 2$ one defines:
\[ 
\br{V,W}_n=\displaystyle\bigcap_{p=1}^{n-1} V^{(p-1)} \otimes W \otimes V^{ (n-p-1)}.
\]
As it is shown in  \cite{jps}, $\br{V,W}$ is a graded subcoring of $T^c_R(V)$.  

If $A$ is a connected graded $R$-ring then $\{A_1,\Ker\mu_{1,1}\}$  will be denoted by $A^!$. The homogeneous component of degree $n$ of $A^!$ will be denoted by $A^!_n$. 

Since $A^!$ is an $R$-subcoring of $T_R^c(A_1)$, the map $\Delta_{1,n-1}$ induces a morphism $\eta_n:A^!_n\to A_1\ot A^!_{n-1}$, for every $n>0$.
\end{fact}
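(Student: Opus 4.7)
The plan is to recognize the statement as a direct formal consequence of $A^!=\{A_1,\Ker\mu_{1,1}\}$ being a graded $R$-subcoring of $T_R^c(A_1)$, a fact recalled above from \cite{jps}. The subcoring property asserts precisely that the comultiplication of $T_R^c(A_1)$ restricts to $A^!$; applied to $\Delta_{1,n-1}$, this yields $\Delta_{1,n-1}(A^!_n)\subseteq A^!_1\ot A^!_{n-1}=A_1\ot A^!_{n-1}$, and $\eta_n$ is defined as this restriction.

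To keep the presentation self-contained, I would also include a short direct verification, which simultaneously makes transparent the role of the semisimplicity of $R$. Under the identification $T_R^c(A_1)_n=A_1^{(n)}$, the map $\Delta_{1,n-1}$ is nothing more than the canonical isomorphism $A_1^{(n)}\cong A_1\ot A_1^{(n-1)}$, so the assertion reduces to the inclusion $A^!_n\subseteq A_1\ot A^!_{n-1}$ inside $A_1^{(n)}$. Since $R$ is semisimple, every $R$-bimodule is flat, hence tensoring with $A_1$ commutes with finite intersections of sub-bimodules. This gives
\[
A_1\ot A^!_{n-1}=\bigcap_{q=1}^{n-2}A_1^{(q)}\ot\Ker\mu_{1,1}\ot A_1^{(n-q-2)},
\]
and after reindexing $p:=q+1$ the right-hand side is precisely the intersection over $p\in\{2,\dots,n-1\}$ of the sub-bimodules defining $A^!_n$; since $A^!_n$ is itself defined by the stronger intersection over $p\in\{1,\dots,n-1\}$, the desired inclusion follows. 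The boundary cases $n=1$ (the canonical identification $A_1\cong A_1\ot R$) and $n=2$ (where $A^!_2=\Ker\mu_{1,1}\subseteq A_1\ot A_1=A_1\ot A^!_1$) are immediate.

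There is essentially no substantive obstacle in this proof. The only point warranting comment is the commutation of tensor products with finite intersections, which is exactly where the hypothesis that $R$ be semisimple is decisive. Under this hypothesis, $\eta_n$ is a well-defined morphism of $R$-bimodules for every $n\geq 1$, with no additional assumptions on $A$ beyond connectedness.
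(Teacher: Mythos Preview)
Your proposal is correct. The paper itself offers no proof here: this is a definitional ``fact'' that simply cites \cite{jps} for the subcoring property of $\br{V,W}$ and then \emph{states} that $\Delta_{1,n-1}$ restricts to a map $\eta_n:A^!_n\to A_1\ot A^!_{n-1}$, exactly as in your first paragraph. Your second and third paragraphs go beyond the paper by supplying a self-contained verification via the flatness of $R$-bimodules (using semisimplicity of $R$) to commute $A_1\ot(-)$ with the finite intersection defining $A^!_{n-1}$; this is a correct and illuminating addition, but it is extra detail the paper does not include.
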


\begin{fact}[Bigraded corings.]
We collect here some basic facts regarding bigraded corings, which hold in general and which we will be using throughout this article when discussing quadratic and Koszul $R$-corings.

Let $C$ be an $R$-coring. We say that $C$ is \emph{bigraded} if $C$ has a decomposition as a direct sum of $R$-bimodules $C=\bigoplus_{n,m \in \mathbb{N}} \ C_{n,m}$ such that its comultiplication induces a collection of maps
\[ C_{n+n',m+m'} \xrightarrow{\Delta_{n,m}^{n',m'}} C_{n,m} \otimes C_{n',m'}.\]
In this context, coassociativity translates as the commutativity of the diagram below, for all positive integers $m$, $n$, $p$, $m'$, $n'$ and $p'$.
\[
 \xymatrixcolsep{3pc}\xymatrix{ C_{n+m+p,n'+m'+p'} \ar[d]_-{\Delta_{n,n'}^{m+p,m'+p'}} \ar[r]^-{\Delta_{n+m,n'+m'}^{p,p'}} & C_{n+m,n'+m'} \otimes C_{p,p'} \ar[d]^-{\Delta_{n,n'}^{m,m'} \otimes\, \Id_{C_{p,p'}}} \\
C_{n,n'} \otimes C_{m+p,m'+p'} \ar[r]_-{\Id_{C_{n,n'}} \otimes \Delta_{m,m'}^{p,p'}} & C_{n,n'} \otimes C_{m,m'} \otimes C_{p,p'}}
\]
By definition, the counit must vanish on $C_{n,m}$, provided that either $n>0$ or $m>0$.

To any bigraded coring, one can associate a graded coring $\mathrm{gr}(C)$, whose homogeneous component of degree $n$ is $\mathrm{gr}_n(C)=\oplus_{m\in\N} \ C_{n,m}$. Thus, we define $\mathrm{gr}(C):=\oplus_{n\in\N} \ \mathrm{gr}_n(C)$. In this paper we are interested only in \textit{connected bigraded corings}, i.e. in those bigraded corings for which  $C_{0,0}=R$ and $C_{0,m}=0$, for $m>0$. Note that, if $C$ is connected, then $\mathrm{gr}(C)$ is connected as well.

If $C$ is a (connected) bigraded coring and we put:
\[ C'_{n,m}=\begin{cases} C_{n,m}, & n=m; \\ 0, & n\neq m,\end{cases}\]
then ${C'}:=\oplus_{n,m\in\N} {C}_{n,m}'$ is a (connected)  bigraded coring. We denote the graded coring  $\mathrm{gr}({C'})$ by $\mathrm{Diag}(C)$. 

For $C$ and $C'$ as above, there are canonical $R$-bimodule morphisms $\pi_{n,m}:C_{n,m} \rightarrow {C}_{n,m}'$ which act as identity on $C_{n,n}$ and zero maps in rest. The collection $\pi=\{ \pi_{n,m} \}_{n,m\in\N}$ defines a morphism of bigraded corings. As $\mathrm{Diag}(C)=\mathrm{gr}({C'})$, the map $\pi$ induces a morphism $\mathrm{gr}(\pi): \mathrm{gr}(C) \xrightarrow{} \mathrm{Diag}(C)$ of graded corings. Clearly, the $n$-degree component of the kernel of $\pi$ coincides with $\oplus_{n\neq m} C_{n,m}$. 
\end{fact}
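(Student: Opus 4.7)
The plan is to verify the three assertions that close this fact: (i) $C'$ inherits the structure of a connected bigraded coring; (ii) the family $\pi = \{\pi_{n,m}\}_{n,m \in \N}$ defines a morphism of bigraded corings; (iii) the $n$-th homogeneous component of $\Ker \mathrm{gr}(\pi)$ equals $\oplus_{m \neq n} C_{n,m}$. All three reduce to purely formal diagram checks, so the real difficulty is not conceptual but notational: one must keep the single degree of $\mathrm{gr}(C)$ cleanly separated from the pair of bidegrees in $C$.

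For (i), I would observe that the candidate comultiplication $\Delta'{}^{n',m'}_{n,m}\colon C'_{n+n',m+m'} \to C'_{n,m} \otimes C'_{n',m'}$ has zero codomain unless $n=m$ and $n'=m'$, so only the diagonal maps $\Delta'{}^{n',n'}_{n,n}$ require a definition; for these I would take the restriction of $\Delta^{n',n'}_{n,n}$ to $C_{n+n',n+n'}$, whose image already lies in $C_{n,n} \otimes C_{n',n'}$. Coassociativity and counitality on $C'$ then follow by restricting to diagonal indices the coassociativity diagram displayed in the fact.

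For (ii), the compatibility of $\pi$ with the comultiplications is the equality
\[
(\pi_{n,m} \otimes \pi_{n',m'}) \circ \Delta^{n',m'}_{n,m} = \Delta'{}^{n',m'}_{n,m} \circ \pi_{n+n',m+m'}.
\]
If $n\neq m$ or $n'\neq m'$, one tensor factor on the left vanishes and the codomain on the right is itself zero, so both composites are zero. If $n=m$ and $n'=m'$, every $\pi$ involved restricts to an identity on its relevant piece, and both composites collapse to $\Delta^{n',n'}_{n,n}$. Compatibility with the counits is immediate because both counits are supported on $C_{0,0} = R$, where $\pi_{0,0} = \Id_R$.

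Assertion (iii) is then automatic: $\mathrm{gr}_n(\pi) = \oplus_m \pi_{n,m}$, so its degree-$n$ kernel is $\oplus_m \Ker(\pi_{n,m}) = \oplus_{m \neq n} C_{n,m}$, which is the claim. No step is technically demanding; the most error-prone point is consistent index bookkeeping, in particular not conflating the total degree in $\mathrm{gr}(C)$ with the second component of the bidegree in $C$.
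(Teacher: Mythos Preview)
Your verification is correct. The paper does not supply a separate proof for this fact; it is stated as a collection of definitions and routine observations, and your bookkeeping argument is exactly the natural way to justify the three closing claims.
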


\begin{lemma} \label{lemain}
Let $C$ be a connected coring.
\begin{enumerate}

\item The coring $C$ is strongly graded if and only if $P(C)=C_1$.
 
\item   If $C$ is strongly graded and $f : C \rightarrow D$ is a  morphism of graded corings such that the components $f_0$ and $f_1$ are injective, then $f$ is injective. 
	 	 
\item Let $C=\oplus_{n,m\in\N}C_{n,m}$ be a bigraded $R$-coring. If $\mathrm{gr}(C)$ is strongly graded and $C_{n,m}=0$ for $n=0,1$ and all $m\neq n$, then the component $C_{n,m}$ is also trivial, for all $n\geq 2$ and $m\neq n$. 
\end{enumerate}
\end{lemma}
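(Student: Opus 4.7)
For (1), the inclusion $C_1\subseteq P(C)$ is automatic from the counit axiom, so only the equality has content. For the forward direction, let $c\in P(C)$ be homogeneous of degree $n$. If $n=0$, the identity $\Delta(c)=c\ot 1+1\ot c$ read in $R\ot R=R$ yields $c=2c$, hence $c=0$. If $n\geq 2$, the $C_1\ot C_{n-1}$ component of $\Delta(c)$ is zero, i.e.\ $\Delta_{1,n-1}(c)=0$, and strong grading forces $c=0$. For the converse, I would induct on $k\geq 1$ to show that $\Delta_{1,k}$ is injective. The base $k=1$ is immediate: if $\Delta_{1,1}(c)=0$ for $c\in C_2$, then $\Delta(c)=c\ot 1+1\ot c$, so $c\in P(C)\cap C_2=C_1\cap C_2=0$. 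For the inductive step, assuming $\Delta_{1,k}(c)=0$, coassociativity gives, for every $(p,q)$ with $p+q=k+1$ and $p\geq 2$,
\[
(\Delta_{1,p-1}\ot \Id_{C_q})\circ\Delta_{p,q}(c)=(\Id_{C_1}\ot \Delta_{p-1,q})\circ\Delta_{1,k}(c)=0.
\]
The inductive hypothesis makes $\Delta_{1,p-1}$ injective (since $p-1<k$ whenever $q\geq 1$), and because $R$ is semisimple all $R$-bimodules are flat, so $\Delta_{1,p-1}\ot \Id_{C_q}$ is injective. Hence $\Delta_{p,q}(c)=0$ for every off-diagonal $(p,q)$, so $c$ is primitive and therefore $c\in C_1\cap C_{k+1}=0$.

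For (2), I would induct on $n$, the cases $n\leq 1$ being given. For $n\geq 2$, naturality of the iterated comultiplication yields a commutative square with horizontal arrows $\Delta(n)_C$ and $\Delta(n)_D$ and vertical arrows $f_n$ and $f_1^{(n)}$. The top arrow is injective because $C$ is strongly graded, and $f_1^{(n)}$ is injective because $f_1$ is injective and every $R$-bimodule is flat (semisimplicity of $R$). Therefore $\Delta(n)_D\circ f_n=f_1^{(n)}\circ\Delta(n)_C$ is injective, and so is $f_n$.

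For (3), the plan is a bidegree bookkeeping argument. The hypothesis $C_{1,m}=0$ for $m\neq 1$ gives $\mathrm{gr}_1(C)=C_{1,1}$, so $\mathrm{gr}_1(C)^{(n)}=C_{1,1}^{(n)}$ is concentrated entirely in second grading $n$. Because the comultiplication of the bigraded coring respects both gradings, the iterated map $\Delta(n)\colon\mathrm{gr}_n(C)\to C_{1,1}^{(n)}$ preserves second degree. Consequently, for $c\in C_{n,m}$, $\Delta(n)(c)$ lies in the second-degree-$m$ part of $C_{1,1}^{(n)}$, which vanishes whenever $m\neq n$. Strong grading of $\mathrm{gr}(C)$ forces $\Delta(n)$ to be injective on $\mathrm{gr}_n(C)$, and a fortiori on its summand $C_{n,m}$, so $C_{n,m}=0$ for all $n\geq 2$ and $m\neq n$.

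The main obstacle I anticipate is the inductive step in the converse of (1): promoting the single vanishing $\Delta_{1,k}(c)=0$ to the vanishing of \emph{all} off-diagonal components $\Delta_{p,q}(c)$ requires delicate use of coassociativity together with flatness over $R$, and it is this chain of implications that lets one identify $c$ as primitive and invoke the hypothesis $P(C)=C_1$.
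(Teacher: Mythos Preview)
Your argument is correct in all three parts, and the overall strategy matches the paper's, though the execution differs in a few places worth noting.

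In (1), the paper treats an arbitrary primitive $x=\sum_k x_k$ directly and reads off the vanishing of each $x_k$ with $k\neq 1$ from the bigraded decomposition of $\Delta(x)$; you instead reduce to \emph{homogeneous} primitives without saying so. That reduction (the homogeneous components of a primitive are themselves primitive) is immediate from the grading, but you should state it. For the converse, the paper inducts on the injectivity of the iterated maps $\Delta(n)$ using \eqref{eq:Delta(n)}, while you induct on the injectivity of $\Delta_{1,k}$ and push the single vanishing $\Delta_{1,k}(c)=0$ to all $\Delta_{p,q}(c)=0$ via coassociativity and flatness. The two inductions are equivalent.

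In (2), the paper runs a genuine induction via the square involving $\Delta_{n,1}$ and $f_n\otimes f_1$. Your square with $\Delta(n)$ and $f_1^{(n)}$ is a one-shot argument that does not actually use an inductive hypothesis (despite your phrasing ``I would induct''); it is a slight streamlining of the paper's proof.

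In (3), the paper constructs the graded coring morphism $\mathrm{gr}(\pi):\mathrm{gr}(C)\to\mathrm{Diag}(C)$, checks that $\mathrm{gr}_0(\pi)$ and $\mathrm{gr}_1(\pi)$ are identities, and then invokes (2) to get injectivity, whence $\Ker\mathrm{gr}_n(\pi)=\oplus_{m\neq n}C_{n,m}=0$. Your direct second-degree bookkeeping---observing that $\Delta(n)$ lands in $C_{1,1}^{(n)}$, which is concentrated in second degree $n$---is a cleaner route that bypasses the $\mathrm{Diag}(C)$ machinery entirely while using the same underlying fact (injectivity of $\Delta(n)$).
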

	
\begin{proof}
Let us prove the first part of the lemma. We first assume that  $C$ is strongly graded. Let $x=\sum_{k=0}^n x_k$ be a primitive element of $C$, where each $x_k$ is homogeneous of degree $k$. Since $C$ is graded, $\Delta(x_k)= \sum_{p=0}^{k}  \Delta_{p,k-p}(x_k)$.  Then
\[
\sum_{k=0}^n x_k \ot 1 + \sum_{k=0}^n 1 \ot x_k =x \ot 1 + 1 \ot x = \Delta(x)=  \sum_{k=0}^n \sum_{p=0}^{k}  \Delta_{p,k-p}(x_k).
\]
We claim that $x_k=0$ for any $k\neq 1$. Because $x_0\in C_0$ we have $\Delta(x_0)=\Delta_{0,0}(x_0)=x_0 \ot 1$, so by equating the terms in $C_0\ot C_0$, it follows that $x_0\ot 1=0$. Thus  $x_0=0$.
	
Let us fix $k \geq 2$. In the rightmost term of the above equation, $\Delta_{p,k-p}(x_k) \in C_p \ot C_{k-p}$. On the other hand, $x_k \ot 1 + 1 \ot x_k \in C_k\ot C_0 + C_0 \ot C_k$, for any $k$. Hence, $\Delta_{p,k-p}(x_k)=0$. Since $C$ is strongly graded, all $\Delta_{p,k-p}$ are injective. It follows that $x_k=0$, so our claim was proved. In conclusion, $x=x_1 \in C_1$, that is we have $P(C)\subseteq C_1$. The other inclusion always holds,  completing the proof of the direct implication.

Note that for $k>1$, by the above proof, we get the relation $P(C)\bigcap C_k=\bigcap_{p=1}^{k-1}\Ker\Delta_{p,k-p}$.

Conversely, let us assume that $P(C)= C_1$ and prove that all maps $\Delta{(n)}$ are injective. Since  $\Ker \Delta(2)=\Ker \Delta_{1,1}$  coincides with $P(C)\bigcap C_2=0$, it follows that this map is injective. We assume that $\Delta(k)$ is injective for all $k\leq n$. Hence, the map $\Delta(p)\otimes\Delta(n+1-p)$ is injective, for all $0<p<n+1$. Using the relation \eqref{eq:Delta(n)} we deduce that $\Delta_{p,n+1-p}(x)=0$, for any $x\in \Ker\Delta(n+1)$ and all $p$ as above. Thus $x\in P(C)\bigcap C_{n+1}=0$, that is $\Delta(n+1)$ is injective.

To prove (2) we assume, for an inductive argument, that the component  $f_k$ is injective for  $k=0,\dots, n$. As $f$ is a morphism of graded corings, the diagram: 
\[
 \xymatrixcolsep{2pc}\xymatrix{ C_{n+1} \ar[d]_-{f_{n+1}} \ar[r]^-{\Delta_{n,1}^{C}} & C_{n} \otimes C_{1} \ar[d]^-{f_n \otimes\, f_1} \\
D_{n+1} \ar[r]_-{\Delta_{n,1}^{D}} &  D_{n} \otimes D_{1}}
\]
is commutative. By the standing assumptions, $\Delta_{n,1}^C$ and $f_n\ot f_1$ are injective.  It follows that $f_{n+1}$ is injective. Thus, by induction, all components of $f$ are injective, so $f$ itself is injective.

It remains to prove the last part of the lemma. Let $\mathrm{Diag}\,C=\oplus_{n\in\N}C_{n,n}$. We denote the canonical morphism of graded $R$-corings by $\mathrm{gr}{(\pi)}: \mathrm{gr}(C) \rightarrow \mathrm{Diag}\,C$. Since we have $\mathrm{gr}_1(C)=\oplus_{m\in\N} C_{1,m}=C_{1,1}$, the component $\mathrm{gr}_1(\pi)$ coincides with the identity map of $ C_{1,1}=P(\mathrm{gr}(C))=\mathrm{Diag}_1(C)$. On the other hand, as $C_{0,0}=\mathrm{gr}_0(C)=\mathrm{Diag}_0(C)$, the component $\mathrm{gr}_0(\pi)$  coincides with the identity map of $C_{0,0}$. Using the first part of the lemma, it follows that $\mathrm{gr}(\pi)$ is injective. As $\mathrm{Ker}\, \mathrm{gr}_n(\pi) =\oplus_{m \neq n} C_{n,m}$, we get $C_{n,m}=0$, for all  $n$ and $m$, with $m\neq n$.
\end{proof}
	
\begin{remark}
In the particular case of ordinary coalgebras, the first part of the lemma is also proved in \cite[Lemma 2.3]{MS}.
\end{remark}

\begin{fact}[The normalized bar resolution.] \label{sec-t(a)}
The groups $\Tor_{\ast }^{A}(R,R)$ may be computed using the \emph{normalized bar resolution} $\beta_{\ast}(A)$, that is the complex given by $\beta_n(A)=A\otimes A_+^{( n)}$, for all $n\geq 0$. The differential map  $\delta_n$ is defined, for every $n>0$, by  the relation:
\begin{equation*}
\delta_{n}(a_{0}\otimes \cdots \otimes a_{n-1}\otimes a_{n}):=\sum\limits\limits_{i=0}^{n-1}(-1)^{i}a_{0}\otimes \cdots \otimes a_{i}a_{i+1}\otimes \cdots \otimes a_{n}.
\end{equation*}%
By  \cite[p. 283]{wei}, the $n$th homology group of $\beta_\ast(A)$ is trivial, if $n>0$, but $\Ho_0(\beta_\ast(A))=R$. 
Since $R$ is semisimple, it follows that $\beta_\ast(A)$ is a resolution of $R$ by projective left $A$-modules. 

Hence the homology groups of the {\it normalized bar complex} $\Omega_\ast(A)=R\ot_A\beta_\ast(A)$ coincide with $\Tor_\ast^A(R,R)$. Obviously, $\Omega_n(A)=\Ap{n}$ and $d_1=0$. For $n>1$, the differential morphism $d_n:\Omega_n(A)\to\Omega_{n-1}(A)$ satisfies the relation:
\[
d_n(a_1\ot\cdots \ot a_n)=\sum_{i=1}^{n-1}(-1)^{i-1}a_1\ot\cdots \ot a_ia_{i+1}\ot\cdots\ot a_n.
\]
Since the normalized bar complex has a canonical structure of DG-coalgebra in the category of $R$-bimodules with respect to the comultiplication $\Delta_{p,q}(a_1\ot\cdots \ot a_{p+q})=(a_1\ot\cdots \ot a_{p})\ot(a_{p+1}\ot\cdots \ot a_{p+q})$ it follows that $T(A)=\oplus_{n \in \mathbb{N}} \ T_n(A)$ has a canonical structure of connected $R$-coring. Note that $\Od{\ast}{A}$ and $T_R^c(A_+)$ are isomorphic as connected $R$-corings.

The complex $\Omega_\ast(A)$ decomposes as  a direct sum  $\Omega_\ast(A)=\oplus_{m \geq 0}\,\Omega_\ast(A,m)$ of subcomplexes. In order to define  $\Omega_\ast(A,m)$ we introduce the  following terminology and notation. An $n$-tuple $\boldsymbol{m}=(m_1,\dots,m_n)$ is called \textit{a positive $n$-partition of $m$} if and only if $\sum_{i=1}^n m_i=m$ and all $m_i$ are positive. The set of all positive $n$-partitions of $m$ will be denoted by $\mathscr{P}_n(m)$. Furthermore, if $\boldsymbol{m}=(m_1,\dots,m_n)$ is a positive $n$-partition and $A$ is a connected $R$-ring then for the tensor product $A_{m_1}\ot\cdots\ot A_{m_n}$ we will use the notation $A_{\boldsymbol{m}}$. 
For a positive $n$-partition $\bs{m}=(m_1,\dots,m_n)$ of $m$, the multiplication $\m$ of $A$ induces bimodule maps $\mu_{\bs{m}}:A_{\bs{m}}\to A_m$ and $\mu(\bs{m}):A_1^{(m_1)}\ot\cdots \ot A_1^{(m_n)}\to A_{\bs{m}}$. Note that, by definition,  $\mu(\bs{m})=\m(m_1)\ot\cdots\ot\m(m_n)$.

Hence, with this notation, $\Omega_\ast(A,m)$ is the following subcomplex of $\Omega_\ast(A)$:
\[
\ 0 \xleftarrow{d_0^m} 0 \xleftarrow{d_1^m}\textstyle \bigoplus \limits_{\boldsymbol{m_1}\in\mathscr{P}_1(m)}  A_{\boldsymbol{m_1}} \xleftarrow{d_2^m}\textstyle\textstyle \bigoplus \limits_{\boldsymbol{m_2}\in\mathscr{P}_2(m)} A_{\boldsymbol{m_2}}\xleftarrow{d_3^m} \cdots \xleftarrow{d^m_n} \textstyle \bigoplus \limits_{\boldsymbol{m_n}\in\mathscr{P}_{n}(m)} A_{\boldsymbol{m_n}}\xleftarrow{\ \ \ }\cdots .
\]
Of course, there is a unique $1$-partition of $m$, namely $\boldsymbol{m_1}=(m)$.  Thus the first direct sum in $\Omega_\ast(A,m)$  coincides with $A_m$. The homology in degree $n$ of $\Omega_\ast(A,m)$ will be denoted either by $T_{n,m}(A)$ or $\mathrm{Tor}_{n,m}^A(R,R)$.  Clearly, we have $T(A)=\oplus_{n,m\in\N} \ T_{n,m}(A)$ and this decomposition is compatible with the coring structure, in the sense that $T(A)$ is a bigraded coring with $T_{n,m}(A)$ in the $(n,m)$ spot.
\end{fact}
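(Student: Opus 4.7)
The statement assembles several classical facts about the normalized bar resolution, so the plan is to verify them in order. First, $\beta_\ast(A)$ is a resolution of $R$ by projective left $A$-modules: each $\beta_n(A) = A \ot \Ap{n}$ is an induced module, hence projective as a left $A$-module, because $R$ is semisimple and so every $R$-bimodule (in particular $\Ap{n}$) is projective. Acyclicity in positive degrees and $H_0(\beta_\ast(A)) = R$ follow from the standard contracting homotopy $s_n(a_0 \ot \cdots \ot a_n) = 1 \ot a_0 \ot \cdots \ot a_n$ on the unnormalized bar complex, which descends to the normalized one since degenerate simplices are killed, as cited from Weibel. Consequently $\Tor_n^A(R,R) = H_n(\Omega_\ast(A))$ with $\Omega_\ast(A) = R \ot_A \beta_\ast(A)$.

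Next, I would identify $\Omega_n(A)$ with $\Ap{n}$ via the canonical isomorphism $R \ot_A (A \ot V) \cong V$ induced by the augmentation $\epsilon: A \to A_0 = R$. Transporting the differential $\delta_n$ along this isomorphism, the $i=0$ summand produces $\epsilon(a_1) \cdot a_2 \ot \cdots \ot a_n = 0$ because $a_1 \in A_+ = \ker \epsilon$, while the remaining summands yield exactly the claimed formula for $d_n$ (with the sign reindexing from $(-1)^i$ to $(-1)^{i-1}$). The $n=1$ case gives an empty sum, so $d_1 = 0$.

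For the coring structure on $T(A)$, I would verify that deconcatenation $\Delta_{p,q}$ makes $\Omega_\ast(A)$ a DG-coalgebra in the category of $R$-bimodules. Coassociativity and counitality are transparent. The chain-map compatibility is checked by partitioning the sum defining $d_n$ according to whether the multiplied pair $(a_i, a_{i+1})$ lies strictly inside the first $p$ factors, strictly inside the last $q$ factors, or straddles the split position $p$; in each case the resulting terms match exactly those produced by $(d \ot \Id + (-1)^p \Id \ot d) \circ \Delta$, with the straddling case absorbed into the top summand of $d \ot \Id$ after first cutting at $p + 1$. Passing to homology transfers the coalgebra structure, endowing $T(A)$ with its connected graded $R$-coring structure. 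The isomorphism $\Omega_\ast(A) \cong T_R^c(A_+)$ of connected $R$-corings is then immediate, since the tensor coring is equipped by definition with precisely the deconcatenation comultiplication.

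Finally, for the bigraded refinement, each multiplication $\m_{p,q}: A_p \ot A_q \to A_{p+q}$ preserves internal degree, so for fixed $m$ the sum $\bigoplus_{\bs{m} \in \Pa{m}{n}} A_{\bs{m}} \subseteq \Ap{n}$ is $d_n$-stable, defining the subcomplex $\Omega_\ast(A, m)$; direct-summing over $m$ recovers $\Omega_\ast(A)$. Since deconcatenation $\Delta_{p,q}$ splits a tuple indexed by a partition $(m_1, \ldots, m_{p+q}) \in \Pa{m}{p+q}$ into its prefix $(m_1, \ldots, m_p)$ and suffix $(m_{p+1}, \ldots, m_{p+q})$, whose partial sums jointly total $m$, the coring structure respects the bigrading and yields $T(A) = \bigoplus_{n,m} T_{n,m}(A)$ as a bigraded coring. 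I expect the DG-coalgebra verification in the third paragraph to be the main technical point; everything else is essentially bookkeeping.
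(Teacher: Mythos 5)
Your proposal is correct and takes essentially the same route as the paper, which simply cites Weibel for the exactness of $\beta_\ast(A)$, invokes the semisimplicity of $R$ for projectivity, and treats the induced differential, the deconcatenation DG-coalgebra structure, and the bigraded refinement as routine verifications that you carry out explicitly (including the harmless global sign discrepancy between $(-1)^i$ and $(-1)^{i-1}$). Two small points to polish: what is needed (and true) is that $\Ap{n}$ is projective as a \emph{left} $R$-module, not as an $R$-bimodule, and the transfer of the comultiplication to homology tacitly uses the K\"unneth isomorphism $\Ho(X)\ot \Ho(Y)\cong \Ho(X\ot Y)$, which holds here because every module over the semisimple ring $R$ is flat.
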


\begin{fact}[The Koszul complex $\K_\ast(A)$.] \label{fa:K(A)}Proceeding as in \cite{jps}, to every connected $R$-ring $A$ we associate a chain complex $\K_\ast(A)$ in the category of graded left $A$-modules, that will play the role of the Koszul complex of $A$.
By definition,  we set: $ \dr{K}_n(A)=A \ot A^!_n$.
Let $n > 0$. If  $x=\sum_{i=1}^r x^i_1\ot x_2^i\otimes\cdots\ot  x_r^i$ is an element in $A_n^!$ and $a\in A$, then the map $\pa_n:\K_n(A)\to\K_{n-1}(A)$ satisfies the relation:
\begin{equation*}
	\pa_{n}(a\otimes x)=a\eta_n(x)=\sum_{i=1}^r ax^i_1\ot x_2^i\otimes\cdots\ot  x_r^i.
\end{equation*}
Since $x\in A^!_n$,  it follows that $x$ belongs to $\Ker\mu_{1,1}\ot  A_1^{(n-2)}$. Thus one can see easily that $(\K_\ast (A),\pa_\ast)$ is a complex of  left $A$-modules. 

One can view  $\K_\ast(A)$ as a complex in the category of graded left $A$-modules, with respect to the grading $K_n(A)=\oplus_{p\in \N}K_n(A)_{p}$, where by definition we have $K_n(A)_{p}=A_{p-n}\ot A_n^!$. Of course, by convention, $K_n(A)_{p}=0$, for $p<n$. Let us notice that $K_n(A)$ is a  projective $A$-module which is generated by a set of elements of degree $n$, namely $\{1\ot x\mid x\in A_n^! \}$.

The complex $R\otimes_A \K_\ast(A)$ is isomorphic to  $(A^!_\ast,0)$ via the canonical identification $R\ot_A\K_n(A)\simeq A^!_n$.
\end{fact}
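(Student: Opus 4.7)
The statement bundles several claims that deserve separate verification: (i) well-definedness of $\partial_n$ as an $A$-linear map $A\otimes A^!_n\to A\otimes A^!_{n-1}$, (ii) the complex condition $\partial^2=0$, (iii) the graded projectivity of $\K_n(A)$ with generators concentrated in degree $n$, and (iv) the identification $R\otimes_A\K_\ast(A)\cong(A^!_\ast,0)$.

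The pivotal step is (i), which depends on justifying the existence of the restricted map $\eta_n:A^!_n\to A_1\otimes A^!_{n-1}$ mentioned at the end of \ref{fact:shrieck}. For $x\in A^!_n=\bigcap_{p=1}^{n-1}A_1^{(p-1)}\otimes\Ker\mu_{1,1}\otimes A_1^{(n-p-1)}$, the defining conditions for $p=2,\dots,n-1$ all factor through the first tensor slot and, after reindexing by $q=p-1$, are exactly the conditions placing the ``tail'' of $x$ inside $\bigcap_{q=1}^{n-2}A_1^{(q-1)}\otimes\Ker\mu_{1,1}\otimes A_1^{(n-q-2)}=A^!_{n-1}$. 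Hence $\Delta_{1,n-1}$ does restrict to $\eta_n$, and $\partial_n(a\otimes x)=a\eta_n(x)$ is $A$-linear by construction. For (ii), a direct computation using $x=\sum x_1^i\otimes\cdots\otimes x_n^i$ yields
\[
\partial_{n-1}\partial_n(a\otimes x)=\sum a\,x_1^ix_2^i\otimes x_3^i\otimes\cdots\otimes x_n^i,
\]
which vanishes thanks to the $p=1$ condition placing $x$ in $\Ker\mu_{1,1}\otimes A_1^{(n-2)}$.

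For (iii), semisimplicity of $R$ makes every $R$-bimodule $R$-projective, so the induced module $A\otimes_R A^!_n$ is a projective left $A$-module. Viewing $A^!_n\subseteq A_1^{(n)}$ as concentrated in degree $n$ gives the grading $\K_n(A)_p=A_{p-n}\otimes A^!_n$, and the set $\{1\otimes x:x\in A^!_n\}$ clearly sits in degree $n$ and generates $\K_n(A)$ as a left $A$-module.

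Finally, for (iv), the canonical isomorphism $R\otimes_A(A\otimes_R A^!_n)\cong A^!_n$ is given by $r\otimes(a\otimes x)\mapsto r\varepsilon(a)x$, where $\varepsilon:A\twoheadrightarrow A_0=R$ is the augmentation. The induced differential sends $\varepsilon(a)x$ to $\sum\varepsilon(ax_1^i)\,x_2^i\otimes\cdots\otimes x_n^i$; since $x_1^i\in A_1$ forces $ax_1^i$ to sit in strictly positive degree, $\varepsilon$ annihilates these coefficients and the induced differential vanishes. The only genuine obstacle is the index bookkeeping in step (i): one must check carefully that the restriction of $\Delta_{1,n-1}$ really lands in $A_1\otimes A^!_{n-1}$ rather than in the larger ambient space $A_1\otimes A_1^{(n-1)}$; everything else is essentially formal.
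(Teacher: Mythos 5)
Your proposal is correct and follows essentially the same route as the paper: the paper defines $\pa_n$ via the restriction $\eta_n$ of $\Delta_{1,n-1}$ (justified there by the subcoring property of $A^!=\br{A_1,\Ker\mu_{1,1}}$ cited from \cite{jps}, which your reindexing argument makes explicit), derives $\pa^2=0$ from $A^!_n\subseteq\Ker\mu_{1,1}\ot A_1^{(n-2)}$, and treats projectivity and the identification $R\ot_A\K_\ast(A)\cong(A^!_\ast,0)$ exactly as you do. The one detail you leave tacit in step (i) is that pulling the intersection out of the first tensor slot uses exactness of $A_1\ot_R(-)$, which is automatic since $R$ is semisimple and is the same fact underlying the paper's citation.
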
 

\begin{fact}[The morphism $\phi^A:A^!\to T(A)$.] \label{fa:exemples_morphisms}

For every $n\in\N$, the component of degree $n$ of $A^!$ is a subset of $ A_1^{( n)}\subseteq A_+^{(n)}$. We claim that the canonical inclusions from $\K_\ast(A)$ to $\beta_\ast(A)$ defines a morphism $\phi'_\ast$ of complexes. Indeed, if $a\ot x \in A\ot A_n^!$, then $\pa_n(a\ot x)=a\eta_n(x)$, by the definition of $\pa_n$. On the other hand, regarding $a\ot x$ as an element in $\beta_n(A)$, we have:
\[
 \delta_n(a\ot x)= a\eta_n(x)+\sum_{i=1}^{n-1}(-1)^i a\ot\big( A_1^{( i-1)}\ot\mu_{1,1}\ot A_1^{( n-i-1)}\big)(x)= a\eta_n(x),
\]
where for the second equality we used the fact that $A_n^!\subseteq A_1^{(i-1)}\ot\Ker\mu_{1,1}\ot A_1^{(n-i-1)}$, for  $0<i<n$. Thus our claim has been proved.

We have remarked that $R\ot_A \K_{\ast}(A)\simeq (A_\ast^!,0)$ and, by definition, $R\ot_A\beta_\ast(A)=\Omega_\ast(A)$. Therefore, the morphism $R\ot_A\phi'_{\ast}:R\ot_A\K_{\ast}(A)\to R\ot_A\beta_{\ast}(A)$ corresponds to a morphism of complexes from $A_\ast^!$ to $\Omega_\ast(A)$, that we denote by $\phi_\ast$. Clearly, $\phi_n$ coincides with the inclusion map of   $A^!_n$ into $\Od{n}{A}$, so $\phi_\ast$ is compatible with  the  graded coring structures of $A^!$ and $\Od{\ast}{A}=T_R^c(A_+)$. Henceforth, passing to homology, $\phi_\ast$ induces a morphism of graded $R$-corings $\phi^A:A^!\to T(A)$.
\end{fact}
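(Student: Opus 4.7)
The statement to justify is that the canonical inclusions $\K_\ast(A)\hookrightarrow\beta_\ast(A)$ give rise, via the canonical identifications $R\otimes_A\K_\ast(A)\simeq(A^!_\ast,0)$ and $R\otimes_A\beta_\ast(A)=\Omega_\ast(A)$, to a chain map $\phi_\ast:A^!_\ast\to\Omega_\ast(A)$ that is compatible with the graded $R$-coring structures, and hence descends to a morphism of graded $R$-corings $\phi^A:A^!\to T(A)$. The plan is to verify three assertions in order: (i) the inclusions $\phi'_n:A\otimes A^!_n\hookrightarrow A\otimes A_+^{(n)}$ commute with differentials; (ii) after tensoring with $R$ over $A$, the resulting map $\phi_\ast$ is compatible with the deconcatenation comultiplications that define the coring structures of $A^!$ and $\Omega_\ast(A)\simeq T_R^c(A_+)$; (iii) taking homology yields the desired morphism $\phi^A:A^!\to T(A)$.

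For (i), fix $a\otimes x\in A\otimes A_n^!$. By the definition of $\partial_n$ given in Fact \ref{fa:K(A)}, we have $\partial_n(a\otimes x)=a\eta_n(x)$. Regarding $a\otimes x$ as an element of $\beta_n(A)$ and applying the bar differential, we obtain
\[
\delta_n(a\otimes x)=a\eta_n(x)+\sum_{i=1}^{n-1}(-1)^{i}\,a\otimes\bigl(A_1^{(i-1)}\otimes\mu_{1,1}\otimes A_1^{(n-i-1)}\bigr)(x).
\]
The definition of $A^!_n$ as the intersection $\bigcap_{p=1}^{n-1}A_1^{(p-1)}\otimes\Ker\mu_{1,1}\otimes A_1^{(n-p-1)}$ forces every summand in the rightmost expression to vanish, so $\delta_n(a\otimes x)=a\eta_n(x)=\phi'_{n-1}(\partial_n(a\otimes x))$. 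Thus $\phi'_\ast$ is indeed a chain map of left $A$-modules. Applying $R\otimes_A(-)$ and using the stated identifications yields a chain map $\phi_\ast$ whose $n$th component is simply the inclusion $A_n^!\hookrightarrow A_1^{(n)}\subseteq A_+^{(n)}=\Omega_n(A)$.

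For (ii), recall that $\Omega_\ast(A)\simeq T_R^c(A_+)$ carries the deconcatenation comultiplication, and by Fact \ref{fact:shrieck} the coring $A^!=\{A_1,\Ker\mu_{1,1}\}$ is, as a graded subcoring of $T_R^c(A_1)$, equipped with the comultiplication inherited by restricting deconcatenation. Since the inclusion $A_1^{(n)}\hookrightarrow A_+^{(n)}$ is visibly compatible with deconcatenation, so is $\phi_\ast$ componentwise. For (iii), since $R$ is semisimple the tensor product over $R$ is exact, so homology commutes with tensor products; this lets us equip $T(A)$ with the graded coring structure induced from the DG-coring structure of $\Omega_\ast(A)$, and a DG-coring map such as $\phi_\ast$ automatically descends to a morphism of graded corings on homology. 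The most delicate point in the argument is the interplay between homology and the coring structure in step (iii); however this is standard once one notes that semisimplicity of $R$ guarantees that the Künneth map for complexes of $R$-bimodules is an isomorphism, so nothing further is required beyond the chain-level verification already carried out.
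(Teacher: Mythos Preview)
Your argument is correct and follows exactly the same route as the paper: the chain-map verification in (i) reproduces the paper's computation verbatim, and (ii) is the paper's observation that $\phi_n$ is the inclusion $A_n^!\hookrightarrow\Omega_n(A)$, hence compatible with deconcatenation. Your step (iii) adds a remark about the K\"unneth isomorphism over a semisimple base that the paper leaves implicit, but otherwise the two presentations coincide.
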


\begin{fact}[Koszul $R$-rings.]
By  \cite[Definition 1.1.2]{bgs}, a connected strongly graded $R$-ring is called \emph{Koszul} if there is a resolution $P_{\ast}$ of $R$ by projective graded left $A$-modules such that each $P_{n}$ is generated by its component of degree $n$. 

Our next goal is to is to characterize  Koszul $R$-rings in terms of properties of the $R$-coring $T(A)$. In particular,  we will recover the well known result that an algebra $A$ is Koszul if and only if $T_{n,m}(A)=0$ for $n\neq m$. More precisely, we have the following. 
\end{fact}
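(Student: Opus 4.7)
The plan is to tie together four conditions on $A$: (i) $A$ is Koszul, (ii) $\phi^A:A^!\to T(A)$ is an isomorphism of graded $R$-corings, (iii) $T(A)$ is strongly graded, (iv) $P(T(A))=T_1(A)$, and (v) the bigraded coring $T(A)$ is diagonal, i.e.\ $T_{n,m}(A)=0$ for $n\neq m$. The implications (iii)$\iff$(iv) follow immediately from Lemma~\ref{lemain}(1) applied to the connected coring $T(A)$, so this piece is for free.

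I would next handle the key equivalence (i)$\iff$(ii). The forward direction: if $A$ is Koszul then there exists a graded projective resolution $P_\ast\to R$ with $P_n$ generated in degree $n$. Tensoring over $A$ by $R$ yields a complex with zero differential whose homology is $T_{n,\ast}(A)=\Tor^A_n(R,R)$, which therefore sits entirely in bidegree $(n,n)$. Using the inclusion $A^!_n\subseteq A_1^{(n)}$ and comparing $\K_\ast(A)$ with $P_\ast$ via the lifting lemma for projective resolutions, one shows that $\K_\ast(A)$ is itself acyclic in positive degrees with $H_0(\K_\ast(A))=R$. Then $R\ot_A\K_\ast(A)\simeq (A^!_\ast,0)$ computes $T(A)$ as well, and the induced map on homology is exactly $\phi^A_\ast$; so $\phi^A$ is an isomorphism. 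Conversely, if $\phi^A$ is an isomorphism, then $A^!_n\simeq T_n(A)$ is the whole of $\Tor^A_n(R,R)$, which forces $\K_\ast(A)$ to be a resolution of $R$ by projective graded $A$-modules generated in the correct degrees, exhibiting $A$ as Koszul.

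For (ii)$\Rightarrow$(iii), note that $A^!$ is a subcoring of $T_R^c(A_1)$, which is strongly graded, so $A^!$ is strongly graded by a direct computation with the iterated comultiplication (or one checks $P(A^!)=A^!_1$ and invokes Lemma~\ref{lemain}(1)); hence $T(A)\simeq A^!$ is strongly graded. For the converse (iii)$\Rightarrow$(ii), apply Lemma~\ref{lemain}(2) to $\phi^A$: both $\phi^A_0=\Id_R$ and $\phi^A_1:A^!_1=A_1\to T_1(A)=A_1$ are identity maps (the latter because $d_1=0$ and $\Omega_1(A)=A_+$ has degree-one part $A_1$), so $\phi^A$ is injective; surjectivity comes from the fact that $T(A)$ is generated in degree $1$ when strongly graded, while the image of $\phi^A$ contains $T_1(A)$ and is a subcoring, hence all of $T(A)$.

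Finally, for (v), observe that $T(A)$ naturally decomposes as a bigraded coring via the subcomplexes $\Omega_\ast(A,m)$ described in \ref{sec-t(a)}, and $T_1(A)=A_1$ sits in bidegree $(1,1)$. Thus $T(A)$ satisfies the hypotheses of Lemma~\ref{lemain}(3) precisely when $T(A)$ is strongly graded, which immediately yields $T_{n,m}(A)=0$ for $n\neq m$. Conversely, if $T(A)$ is diagonal, then $T(A)=\mathrm{Diag}(T(A))$, which is strongly graded as a quotient generated in degree $1$ of a strongly graded coring, closing the cycle. The main obstacle will be the argument in (i)$\Leftrightarrow$(ii) that $\K_\ast(A)$ is acyclic whenever $A$ is Koszul; this requires a careful comparison of two projective resolutions of $R$ together with a degree/minimality argument to ensure the Koszul complex has no extra homology above $R$.
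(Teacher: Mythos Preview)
Your chain of implications has two genuine gaps.

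First, the surjectivity step in (iii)$\Rightarrow$(ii) --- and the parallel claim in (v)$\Rightarrow$(iii) --- rests on a confusion: a strongly graded \emph{coring} is not ``generated in degree~$1$''; that is the ring-side notion. Strong grading of $C$ means the maps $\Delta(n):C_n\to C_1^{(n)}$ are \emph{injective}, so $C$ is cogenerated by $C_1$, but there is no reason an injective subcoring containing $C_1$ should be all of $C$. Lemma~\ref{lemain}(2) gives you injectivity of $\phi^A$ and nothing more. The paper never attempts (iii)$\Rightarrow$(ii) or (v)$\Rightarrow$(iii) directly for exactly this reason.

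Second, your route back from (ii) to (i) is not justified: knowing that $\phi^A$ is an isomorphism tells you $A^!_n\simeq\Tor^A_n(R,R)$ as $R$-bimodules, but this alone does not force $\K_\ast(A)$ to be exact in positive degrees. You correctly flag acyclicity of $\K_\ast(A)$ as ``the main obstacle'', but you locate it inside (i)$\Rightarrow$(ii), where in fact it is not needed at all: once a Koszul resolution $P_\ast$ exists, $R\otimes_A P_\ast$ is concentrated on the diagonal and (v) follows immediately.

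The paper organises the cycle differently, and this is where the real work sits. It proves (1)$\Rightarrow$(6) trivially, then does the hard step (6)$\Rightarrow$(2) by induction: assuming $\K_\ast(A)$ is exact through degree $n$, one compares two projective resolutions to obtain $\Tor^A_{n+1,m}(R,R)\simeq\Tor^A_{1,m}(R,Z_{n-1})$, uses the vanishing for $m\neq n+1$ to show the cycle module $Z_n$ is generated in internal degree $n+1$, and identifies that generating piece with $A^!_{n+1}$ via $\partial_{n+1}$. From (2) one gets (1) directly and (3) via the Comparison Theorem; then (3)$\Rightarrow$(4)$\Rightarrow$(5)$\Rightarrow$(6) follow from Lemma~\ref{lemain}. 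Your scheme can be salvaged, but only by importing precisely this (6)$\Rightarrow$(2) induction; without it the loop does not close.
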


\begin{theorem} \label{thm:kring}
Let $A$ be a connected strongly graded $R$-ring. The following are equivalent:
\begin{enumerate}
\item The $R$-ring $A$ is Koszul.
\item The complex $\K_\ast(A)$ is exact in degree $n$, for all $n>0$.
\item The canonical  $R$-coring morphism $\phi^A:A^!\rightarrow T(A)$ is an isomorphism.
\item The $R$-coring $T(A)$ is strongly graded.
\item Any primitive element of $T(A)$ is homogeneous of degree $1$.
\item If $n\neq m$, then $T_{n,m}(A)=0$.
\end{enumerate}	
\end{theorem}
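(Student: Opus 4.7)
My plan is to prove the equivalences by cycling through
$(1)\Rightarrow(6)\Leftrightarrow(3)\Rightarrow(4)\Leftrightarrow(5)\Rightarrow(6)$ to link $(3)$--$(6)$, and then closing with $(6)\Rightarrow(2)\Rightarrow(1)$.

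The cornerstone is $(6)\Leftrightarrow(3)$, which I would derive by directly computing the diagonal of the bigraded coring $T(A)$. Since $n$ admits no positive $(n{+}1)$-partition, the subcomplex $\Omega_\ast(A,n)$ vanishes in homological degrees above $n$, while $\Omega_n(A,n)=A_1^{(n)}$. As $d_n^n$ is induced by multiplying adjacent tensor factors, one obtains
\[
T_{n,n}(A)=\ker d_n^n=\bigcap_{i=1}^{n-1}A_1^{(i-1)}\otimes\ker\mu_{1,1}\otimes A_1^{(n-i-1)}=A_n^!,
\]
and $\phi_n^A$ acts as the identity on this diagonal piece of $T(A)$. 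Hence $\phi^A$ is an isomorphism of graded $R$-corings if and only if $T_{n,m}(A)=0$ for all $n\neq m$, establishing $(6)\Leftrightarrow(3)$.

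The other links among $(1)$--$(6)$ are shorter. The implication $(1)\Rightarrow(6)$ is the classical minimal-resolution argument: if each $P_n$ is graded-projective generated in degree $n$, then $R\otimes_A P_n$ is concentrated in internal degree $n$, and so is $\Tor^A_n(R,R)$. For $(3)\Rightarrow(4)$, note that $A^!$ is a subcoring of the tensor coring $T_R^c(A_1)$, which is strongly graded because its deconcatenation is injective; hence $A^!$ and, via $\phi^A$, also $T(A)$ are strongly graded. The equivalence $(4)\Leftrightarrow(5)$ is Lemma~\ref{lemain}(1). For $(4)\Rightarrow(6)$ I would invoke Lemma~\ref{lemain}(3) on the bigraded coring $T(A)$, verifying the hypothesis for $n=0,1$: the case $n=0$ is immediate from $\Omega_0(A)=R$, while
\[
T_1(A)=H_1(\Omega_\ast(A))=A_+/A_+^2=A_1,
\]
where the last equality uses that $A$ being strongly graded forces $A_m\subseteq A_+^2$ for all $m\geq 2$.

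It remains to close via $(6)\Rightarrow(2)\Rightarrow(1)$. The step $(2)\Rightarrow(1)$ is immediate: exactness of $\K_\ast(A)$ in positive degrees means its augmentation by $\K_0(A)=A\twoheadrightarrow R$ gives a graded-projective resolution of $R$ with $n$th term generated in degree $n$. The main obstacle is $(6)\Rightarrow(2)$: granted $(6)$, by $(6)\Leftrightarrow(3)$ the map $\phi^A$ is an iso, so the comparison $\phi'_\ast\colon\K_\ast(A)\to\beta_\ast(A)$ of bounded-below complexes of graded-projective $A$-modules becomes a quasi-isomorphism after applying $R\otimes_A -$. I would then invoke a graded Nakayama-type conservativity principle—valid for bounded-below complexes of graded-projective modules over a connected $R$-ring with $R$ semisimple—to conclude that the mapping cone of $\phi'_\ast$ is itself acyclic. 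Therefore $\phi'_\ast$ is a quasi-isomorphism, and since $\beta_\ast(A)$ resolves $R$, so does $\K_\ast(A)$, yielding $(2)$.
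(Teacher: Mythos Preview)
Your proposal is correct, and the overall logical skeleton matches the paper's, but two of the key links are argued differently.

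\textbf{The link $(3)\Leftrightarrow(6)$.} The paper does not prove this directly. It obtains $(3)$ from $(2)$: once $\K_\ast(A)$ is a projective resolution of $R$, the Comparison Theorem makes $\phi'_\ast$ a homotopy equivalence, so $\phi^A=H_\ast(R\otimes_A\phi'_\ast)$ is an isomorphism. You instead identify the diagonal of the bigraded coring $T(A)$ explicitly: $\Omega_{n+1}(A,n)=0$, $\Omega_n(A,n)=A_1^{(n)}$, and the components of $d_n^n$ land in the pairwise-disjoint summands $A_1^{(i-1)}\otimes A_2\otimes A_1^{(n-i-1)}$, giving $T_{n,n}(A)=\bigcap_i A_1^{(i-1)}\otimes\Ker\mu_{1,1}\otimes A_1^{(n-i-1)}=A_n^!$ (flatness over $R$ is free since $R$ is semisimple). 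This is a clean direct computation the paper does not make explicit; it buys you $(6)\Leftrightarrow(3)$ without passing through $(2)$.

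\textbf{The link $(6)\Rightarrow(2)$.} The paper argues by an elementary, self-contained induction: assuming $\K_\ast(A)$ is exact in degrees $<n$, a dimension shift gives $\Tor_{n+1,m}^A(R,R)\cong\Tor_{1,m}^A(R,Z_{n-1})$, and from its vanishing for $m\neq n+1$ one reads off $(R\otimes_A Z_n)_m=0$; graded Nakayama then shows $Z_n$ is generated in internal degree $n+1$, where $\partial_{n+1}$ is visibly onto. Your route is more derived-categorical: from $(6)\Leftrightarrow(3)$ the map $R\otimes_A\phi'_\ast$ is a quasi-isomorphism, and you invoke a conservativity principle for the cone. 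This is valid here---each term of $\mathrm{Cone}(\phi'_\ast)$ is $A\otimes_R V_n$ with $V_n$ concentrated in internal degrees $\geq n-1$, so for each fixed internal degree the $A$-adic filtration $F^p=A_{\geq p}\otimes V_\ast$ is finite and has associated graded $A_p\otimes (V_\ast)_{m-p}$, which is acyclic since $R\otimes_A\mathrm{Cone}(\phi'_\ast)=V_\ast$ is. It would strengthen your write-up to state this one-paragraph filtration argument rather than leaving the principle as a black box; the paper's virtue is that its induction is entirely self-contained.

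The remaining steps---$(1)\Rightarrow(6)$ via a minimal resolution, $(3)\Rightarrow(4)$ because $A^!\subseteq T^c_R(A_1)$ is strongly graded, $(4)\Leftrightarrow(5)$ by Lemma~\ref{lemain}(1), $(4)\Rightarrow(6)$ by Lemma~\ref{lemain}(3) using $T_{1,m}(A)=(A_+/A_+^2)_m=0$ for $m\neq1$, and $(2)\Rightarrow(1)$---agree with the paper.
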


\begin{proof}
Let us assume that $A$ is Koszul. We fix a resolution $(P_\ast,d_\ast)$ of $R$ such that each $P_n$ is graded and generated by $P_{n,n}$, the homogeneous component of degree $n$. Hence, $T_{n,m}(A)$ is the $n$th homology group of the complex $K_\ast(m)=( R\ot_AP_\ast)_m$.  Thus $K_n(m)$ is the quotient of $P_{n,m}$ by the submodule $\sum_{i=1}^{m}A_iP_{n,m-i}$. The differential maps of this complex are induced by those of the resolution $(P_\ast,d_\ast)$. Since $P_n$ is generated by $P_{n,n}$ it follows that  $K_n(m)=0$ for $n\neq m$. Thus  (1)  $\Longrightarrow$ (6).

To prove that $(6) \Longrightarrow (2)$ we proceed by induction, as follows. By construction,  $A_0^!=R$. Thus $\K_0(A)\simeq A$, so we can augment the complex $\K_\ast(A)$ using the projection $\pa_0:A\to R$. 

Since $A_1^!=A_1$ we can identify $\K_1(A)\to \K_0(A)\to R\to 0$ with the sequence: 
\[
 A\ot A_1\rightarrow A \rightarrow R\rightarrow 0,
\]
where the leftmost arrow is induced by the multiplication map of $A$. Using the fact $A$ is strongly graded, it follows that this sequence is exact. Supposing that  the sequence:
\begin{equation}\label{rez1}
A \otimes A_n^! \xrightarrow{\pa_n} A \otimes A_{n-1}^! \xrightarrow{\pa_{n-1}} \cdots \xrightarrow{\pa_1} A\otimes  A_0^! \xrightarrow{\pa_0} R \rightarrow 0
\end{equation}
is exact, we want to prove that $\K_\ast(A)$ is exact in degree  $n$. We denote the module of $n$-cycles by $Z_n$. Thus we have an exact sequence
\begin{equation}\label{sequenceKL}
 0\rightarrow  Z_n\rightarrow A\otimes A_n^! \rightarrow Z_{n-1} \rightarrow 0.
\end{equation}
Since the  homogeneous components of $A\otimes A_n^!$ are trivial  in degree less than $n$ and $d_n$ is injective on $R\ot A_n^!$, the $i$-degree component $Z_{n,i}$ of $Z_n$ is trivial, for any  $i\leq n$.

We complete the sequence \eqref{rez1} to a resolution by projective graded left $A$-modules:
\begin{equation}
\label{rez3} \cdots \rightarrow P_{n+2} \rightarrow P_{n+1} \rightarrow A\ot  A^!_n \rightarrow  A\ot  A^!_{n-1}  \rightarrow \cdots \rightarrow  A\ot  A^!_0 \rightarrow R \rightarrow 0.
\end{equation}
In particular, we obtain a resolution of $Z_{n-1}$ by projective graded left $A$-modules:
\begin{equation}
\label{rez2} \cdots \rightarrow P_{n+2} \rightarrow P_{n+1} \rightarrow  A\ot  A^!_n \rightarrow Z_{n-1} \rightarrow 0.
\end{equation}
Comparing the resolutions (\ref{rez3}) and (\ref{rez2}), we get an isomorphism $\mathrm{Tor}_{n+1,m}^A(R,R) \simeq \mathrm{Tor}^A_{1,m}(R,Z_{n-1})$, for every $m$. But, taking into account the sequence \eqref{sequenceKL}, it follows that  
\[
0 \rightarrow \mathrm{Tor}_{1,m}^A(R,Z_{n-1}) \rightarrow (R \otimes_A Z_n)_m \rightarrow (R\ot A^!_n)_m \rightarrow (R \otimes_A Z_{n-1})_m \rightarrow 0
\] 
is exact, for all $m$. Recall that $R\ot A_n^! \simeq A_n^!$ contains only homogeneous elements of degree $n$. 

We fix $m>n+1$. Since, by hypothesis, $\mathrm{Tor}^A_{n+1,m}(R,R)=0$ it follows that $\mathrm{Tor}^A_{1,m}(R,Z_{n-1})$ vanishes as well. Moreover, using the above exact sequence we get  $(R \otimes_A Z_n)_m=0$.  

As $(A / A_+) \otimes_A  Z_n\simeq Z_n/ A_+Z_n$, we get that $Z_{n,m} =(A_+Z_{n})_m$. We have remarked that $Z_{n,i}=0$, for $i\leq n$. Therefore,
\[
Z_{n,m} = A_{m-n-1}Z_{n,n+1} + \dots + A_1Z_{n,m-1}.
\]
In particular, $Z_{n,n+2}=A_{1}Z_{n,n+1}$. Since $A_{n-m-1}=A_1A_{n-m-2}$, it is not difficult to see by induction on $k>n$ that $Z_{n,k}=A_{k-n-1}Z_{n,n+1}$. Thus $Z_n$ is generated by  $Z_{n,n+1}$, so we conclude the proof of this implication by noticing that $\pa_{n+1}$ induces an isomorphism between the component of degree $n+1$ of $\K_{n+1}(A) $, which coincides with $ R\ot A^!_{n+1}$,   and  $Z_{n,n+1}$.

For the implication $(2) \Longrightarrow (1)$, we remark that, under the standing assumption,  $\dr{K}_\ast(A)$ provides a projective resolution of $R$ that satisfies the conditions in the definition of Koszul rings, cf. \S\ref{fa:K(A)}. 

Let us prove that (2) $\Longrightarrow$ (3). Since $A$ is strongly graded by assumption, it follows that $\K_\ast(A)$ is a resolution of $R$ as a left $A$-module. By \S\ref{fa:exemples_morphisms}, there is a morphism of complexes $\phi _{\ast }':\K_{\ast}(A)\to \beta{}_{\ast}(A)$, which  lifts the identity map of $R$. Since  $\beta{}_{\ast}(A)$ is also a resolution of $R$ by projective left $A$-modules, by the Comparison Theorem,  the morphism $\phi_\ast$  is invertible up to a homotopy in the category of complexes of left $A$-modules. Hence $\phi_\ast=R\ot_A\phi_\ast'$ is  invertible up to a homotopy in the category of complexes of $R$-modules. It follows that $\phi_n^A=H_n(R\ot_A \phi_\ast)$ is an isomorphism for all $n\geq 0$. 

Assuming that the canonical morphism of $R$-corings  $A^!\to T(A)$ is an isomorphism, it follows that $T(A)$ is strongly graded, as $A^!$ is always so. Thus we have proved that (3) $\Longrightarrow$ (4).

The implications (4) $\Longrightarrow$ (5) and (5) $\Longrightarrow$ (6) follow by Lemma \ref{lemain}(2) and Lemma \ref{lemain}(3), respectively.  Note that, in order to apply the third part of Lemma \ref{lemain}, we use the fact that $A$ is strongly graded, so $T_{1,m}=0$, for all $m\neq 1$. Clearly, $T_{0,m}=0$, for all $m>0$. 
\end{proof}

As a consequence of the theorem let us prove that the direct product of two Koszul rings is Koszul, and conversely. This result will be used in \S\ref{fa:tailins} to show that a poset is Koszul, provided that its Hasse diagram is a planar tiling.

\begin{proposition} \label{suma_koszul}
Let $R$ and $S$ be semisimple rings. We assume that $A$ is a connected $R$-ring and $B$ is a connected $S$-ring. The $R \times S$-ring $A \times B$ is Koszul if and only if $A$ and $B$ are Koszul.    
\end{proposition}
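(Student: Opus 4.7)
The plan is to reduce the statement to the vanishing criterion (6) from Theorem \ref{thm:kring}, by showing that the bigraded coring $T(A\times B)$ splits as a direct product of the bigraded corings $T(A)$ and $T(B)$. The main structural input is that $R\times S$ carries the two central idempotents $e_R=(1,0)$ and $e_S=(0,1)$, so every $(R\times S)$-bimodule $M$ splits as $M=e_RMe_R\oplus e_RMe_S\oplus e_SMe_R\oplus e_SMe_S$. For $C=A\times B$ the mixed pieces vanish: $e_RCe_S=0=e_SCe_R$, while $e_RCe_R\simeq A$ as $R$-bimodule and $e_SCe_S\simeq B$ as $S$-bimodule. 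More importantly, tensor product over $R\times S$ decomposes as $M\otimes_{R\times S}N=Me_R\otimes_Re_RN\,\oplus\,Me_S\otimes_Se_SN$.

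First I would verify that $A\times B$ is a connected strongly graded $(R\times S)$-ring if and only if both $A$ and $B$ are such over their respective bases. Connectedness in degree zero is immediate since $(A\times B)_0=R\times S$. Applying the preceding decomposition iteratively to $(A\times B)_+^{(n)}$ gives the identification $(A\times B)_+^{(n)}\simeq A_+^{(n)}\times B_+^{(n)}$, and the iterated multiplication map $\mu^{A\times B}(n)$ decomposes as the product $\mu^A(n)\times \mu^B(n)$. Surjectivity of the left-hand side is then equivalent to simultaneous surjectivity of both factors.

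Next I would apply the same decomposition to the normalized bar complex. Because all tensor products in $\Omega_\ast(A\times B)=(A\times B)_+^{(\ast)}$ are over $R\times S$, the same idempotent argument yields a canonical isomorphism of complexes
\[
\Omega_\ast(A\times B)\;\simeq\;\Omega_\ast(A)\,\times\,\Omega_\ast(B),
\]
and this isomorphism respects the decomposition into the subcomplexes $\Omega_\ast(-,m)$ described in \S\ref{sec-t(a)}, because the grading on $A\times B$ is the componentwise one. Passing to homology in each internal degree gives a bigraded isomorphism
\[
T_{n,m}(A\times B)\;\simeq\;T_{n,m}(A)\,\times\,T_{n,m}(B).
\]

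Finally, combining the two steps with Theorem \ref{thm:kring}, the ring $A\times B$ is Koszul if and only if it is strongly graded and $T_{n,m}(A\times B)=0$ for every $n\neq m$. By the isomorphism above, the latter vanishing is equivalent to $T_{n,m}(A)=0=T_{n,m}(B)$ for all $n\neq m$, while strong gradedness of $A\times B$ is equivalent to strong gradedness of both $A$ and $B$. Invoking Theorem \ref{thm:kring} once more in the other direction for $A$ and for $B$ separately, this is exactly the Koszulity of both $A$ and $B$. The only point requiring care is the compatibility of the idempotent decomposition with the bar differential, which is routine once one observes that the multiplication $\mu_{A\times B}$ itself splits as $\mu_A\times\mu_B$ under the identification $e_R(A\times B)e_R\otimes_Re_R(A\times B)e_R\simeq A\otimes_RA$ and analogously on the $S$-side; this is the main (though mild) technical obstacle.
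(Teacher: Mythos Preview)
Your proposal is correct and follows essentially the same route as the paper: both arguments use the orthogonal idempotents of $R\times S$ to obtain the splitting $\Tor_{n,m}^{A\times B}(R\times S,R\times S)\simeq \Tor_{n,m}^{A}(R,R)\oplus \Tor_{n,m}^{B}(S,S)$ and then invoke Theorem~\ref{thm:kring}. The only cosmetic difference is that the paper derives this isomorphism via the equivalence of module categories $\mathrm{Mod}\text{-}(A\times B)\simeq \mathrm{Mod}\text{-}A\times \mathrm{Mod}\text{-}B$ and an arbitrary pair of graded projective resolutions, whereas you decompose the normalized bar complex directly; your version has the small bonus of making the strong-gradedness hypothesis of Theorem~\ref{thm:kring} explicit.
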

\begin{proof}
Let $e_A$ and $e_B$ denote the canonical orthogonal idempotents in $A\times B$. For a right $A\times B$-module $M$ we will denote  the abelian groups $Me_A$ and $Me_B$ by $M_A$ and $M_B$, respectively. We can see $M_A$ as a right $A$-module with respect to the action $m\cdot a=m(a,0)$. In a similar way, $M_B$ will be regarded as a right $B$-module. Note that we have a natural isomorphism of abelian groups:
\[
 \nu_M:M\to M_A\oplus M_B, \quad \nu_M(m)=(me_A,me_B).
\]
Conversely, if $X$ is a right $A$-module and $Y$ is a right $B$-module, then $X\oplus Y$ is a 
 the right $A\times B$-module with respect to the action given by $(x,y)\cdot (a,b)=(x\cdot a,y\cdot b)$. In particular  $ M_A\oplus M_B$ becomes a right $A\times B$-module and $\nu_M$ is an isomorphism of $A\times B$-modules. 

Furthemore, for $X$ and $Y$ as above, we have natural isomorphisms $(X\oplus Y)_A\simeq  X$ and $(X\oplus Y)_B\simeq  Y$. It follows that the mapping $(X,Y)\mapsto X\oplus Y$ defines an equivalence between the product category of the categories of right $A$-modules and $B$-modules category  and the category of right $A\times B$-modules. 

Since an equivalence of category preserves the projective objects, we deduce that $X\oplus Y$ is projective as an $A\times B$-module, provided that $X$ and $Y$ are projective as a right $A$-module and as a $B$-module, respectively. 

Let $P_\ast\to R$ be a projective resolution in the category of right $A$-modules. We assume that every $P_n$ is an $\N$-graded module and the differential morphisms are graded maps. We pick a similar resolution of $Q_\ast\to S$ of  $S$ by projective $B$-modules. In view of the foregoing remarks, $P_\ast\oplus Q_\ast\to R\times S$ is a  resolution of $R\times S$ by projective graded $A\times B$-modules. Taking into account the isomorphism of graded abelian groups:
\[
 (P_\ast\oplus Q_\ast)\ot_{A\times B}(R\times S)\simeq
 (P_\ast \ot_A R)\oplus (Q_\ast\ot_B S),
\]
we deduce that $\Tor_{n,m}^{A\times B}(R\times S,R\times S)\simeq \Tor_{n,m}^{A}(R,R)\oplus \Tor_{n,m}^{B}(S,S)$. Using the previous theorem we conclude that $A\times B$ is Koszul if and only if $A$ and $B$ are so.
\end{proof}

\section{Koszul graded posets} 
In this section, we will investigate the Koszulity of the incidence ring associated to a finite graded poset. We will also  provide some classes of posets whose incidence rings  are Koszul. All of them will be obtained by adjoining a greatest element to a given subset of a Koszul poset. The Koszulity  of the resulting poset is ensured by imposing  homological restrictions  to a certain module, canonically associated to the given subset. 

\begin{fact}[Incidence (co)rings.] \label{fa:incidence}  Let $(\kal{P},\leq)$ be a finite poset. If $x\leq y$  are comparable elements in $\kal{P}$, then one defines the interval $[x,y]$ to be the set $[x,y]:=\{z\in\kal{P}\mid x\leq z\leq y\}$. In the case when  $x<y$ and $[x,y]=\{x,y\}$ we shall say that $x$ is a \emph{predecessor} of $y$ or that $y$ is a \emph{successor} of $x$. By definition, a \emph{chain} in $[x,y]$ is an increasing  sequence $x_0<x_1<\dots < x_l$ such that $x_0=x$ and $x_l=y$. The chain is said to be \textit{maximal}, provided that $x_i$ is a predecessor of $x_{i+1}$, for all $i$. The number $l$ will be called the \emph{length} of the chain.

In this paper we are interested only in \emph{graded posets}, that is in those posets satisfying the property that for every interval all of its maximal chains have the same length. We write $l([x,y])$ for the length of a maximal chain in $[x,y]$, and we refer to this number as the \emph{length} of $[x,y]$. The set of intervals of length $p$ will be denoted by $\kal{I}_p$. 

Let $\Bbbk$ be a field. For  a finite poset $\kal{P}$  we denote the $\Bbbk$-linear  space having a basis $\kal{B}:=\{e_{x,y}\mid x\leq y\}$ by  $\Bbbk[\kal{P}]$. With respect to the product given by: 
\[
e_{x,y} \cdot e_{z,u} = \delta_{y,z} e_{x,u},
\]
 $\Bbbk[\kal{P}]$ becomes an associative and unital $\Bbbk$-algebra, with unit $\sum_{x\in\kal{P}}e_{x,x}$, that will be called the  \emph{incidence algebra} of $\kal{P}$, and it will be denoted by $\Bbbk^a[\kal{P}]$. Note that $\{e_{x,x}\}_{x \in \kal{P}}$ is a set of orthogonal idempotents, so it spans a $\Bbbk$-subalgebra $R$, which is isomorphic to $\Bbbk^n$, where $n=|\kal{P}|$. Hence $\Bbbk^a[\kal{P}]$ can be regarded as an $R$-ring, which is graded and connected, provided that $\kal{P}$ is graded. Note that its homogeneous component of degree $p$ is the linear space generated by all  elements $e_{x,y}$ such that $l([x,y])=p$. The $R$-ring  $\Bbbk^a[\kal{P}]$ will be called the \emph{incidence $R$-ring of} $\kal{P}$.

We also fix  a maximal element $t\in\kal{P}$. Let $\kal{Q}:=\kal{P} \setminus \{t\}$ and let $\kal{F}$ denote the set of predecessors of $t$ in $\kal{P}$. The incidence algebra $\kal{Q}$ will be denoted by $B$. Of course, $B$ is an $S$-ring, where $S=\sum_{x \in \kal{Q}} \Bbbk e_{x,x}$.  
\end{fact}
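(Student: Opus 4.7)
The final statement is purely definitional: it introduces the incidence algebra, its grading, and fixes the notation ($\kal{Q}$, $\kal{F}$, $B$, $S$) used in the rest of the section. There is no non-trivial claim to prove, only a handful of routine structural assertions that are asserted in passing. The plan is therefore to note exactly which assertions require a (short) verification and how one would carry them out.

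First I would check that the product $e_{x,y}\cdot e_{z,u}=\delta_{y,z}\,e_{x,u}$ is associative and unital. Associativity is immediate on basis elements: both $(e_{x,y}\cdot e_{z,u})\cdot e_{v,w}$ and $e_{x,y}\cdot(e_{z,u}\cdot e_{v,w})$ collapse to $\delta_{y,z}\delta_{u,v}e_{x,w}$, and extending $\Bbbk$-bilinearly gives an associative multiplication. For the unit, writing $1=\sum_{x\in\kal{P}}e_{x,x}$, the identities $e_{z,z}\cdot e_{x,y}=\delta_{z,x}e_{x,y}$ and $e_{x,y}\cdot e_{z,z}=\delta_{y,z}e_{x,y}$ show that $1\cdot e_{x,y}=e_{x,y}=e_{x,y}\cdot 1$.

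Next I would verify that $\{e_{x,x}\}_{x\in\kal{P}}$ is a complete set of pairwise orthogonal idempotents: from $e_{x,x}\cdot e_{y,y}=\delta_{x,y}\,e_{x,x}$ one sees that the subspace $R$ they span is a commutative subalgebra isomorphic to $\Bbbk^{|\kal{P}|}$ via $e_{x,x}\mapsto$ the standard basis vector indexed by $x$. This automatically endows $\Bbbk^a[\kal{P}]$ with the structure of an $R$-bimodule, with $e_{x,x}\cdot e_{u,v}\cdot e_{y,y}=\delta_{x,u}\delta_{y,v}\,e_{u,v}$, making it an $R$-ring.

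The only point where the graded hypothesis on $\kal{P}$ is actually used is in showing that setting $\deg(e_{x,y}):=l([x,y])$ defines an $\mathbb{N}$-grading compatible with multiplication. If $e_{x,y}\cdot e_{y,z}=e_{x,z}$, then concatenating a maximal chain from $x$ to $y$ with one from $y$ to $z$ produces a maximal chain from $x$ to $z$, and because $\kal{P}$ is graded all maximal chains in $[x,z]$ have the same length, so $l([x,z])=l([x,y])+l([y,z])$. The degree-zero component is exactly $R$, so $\Bbbk^a[\kal{P}]$ is connected. The final assertions---that $\kal{Q}=\kal{P}\setminus\{t\}$ is a (graded) subposet whose incidence ring $B$ is an $S$-ring over $S=\sum_{x\in\kal{Q}}\Bbbk\,e_{x,x}$, and that $\kal{F}$ is well defined---are then immediate from the definitions, since removing a maximal element preserves the lengths of all remaining intervals. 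No serious obstacle arises; the only step with any content is the compatibility of the length function with composition of intervals, which is precisely the content of the graded poset hypothesis.
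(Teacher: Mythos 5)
Your assessment is correct: this is a definitional paragraph for which the paper supplies no proof, and your routine verifications --- associativity and unitality of the product, orthogonality of the idempotents $e_{x,x}$, the additivity $l([x,z])=l([x,y])+l([y,z])$ coming precisely from the graded hypothesis, connectedness in degree zero, and the observation that deleting the maximal element $t$ preserves all remaining interval lengths so that $\kal{Q}$ is again graded --- are exactly the checks the paper leaves implicit. Nothing further is needed.
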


\begin{fact}[Notation.]\label{fa:not} 
Let $\Bbbk$ be a field, and we fix a finite graded poset $\mathscr{P}$, together with a maximal element $t\in\kal{P}$. Let $\kal{Q}:=\kal{P} \setminus \{t\}$ and let $\kal{F}$ denote the set of predecessors of $t$ in $\kal{P}$. The incidence algebras of $\kal{P}$ and  $\kal{Q}$ will be denoted by $A$ and $B$, respectively. We have seen that  $A$ is an $R$-ring, where $R=\sum_{x \in \kal{P}} \Bbbk e_{x,x}$. Similarly, $B$ is an $S$-ring, where $S=\sum_{x \in \kal{Q}} \Bbbk e_{x,x}$.   Considering only the elements  $e_{x,y}$, with $x,y\in\mathscr{Q}$, we get the canonical basis of $B$.

Finally, let $M$ denote the linear subspace of $A$ generated by all elements $e_{x,t}$, with $x<t$. Hence $e_{x,t}\in M$ if and only if $x \leq u$ for a certain $u \in \kal{F}$. It is easy to see that $M$ is a $B$-submodule of $A$ and
\[ 
 M=\sum_{u \in \kal{F}} Be_{u,t}.
\]
Recall that an $n$-chain in $\kal{P}$  is a sequence $\bs{x}=(x_0,\dots,x_n)$ of elements in $\kal{P}$ such that $x_0<\dots <x_n$. For the element $x_n$ of $\bs{x}$ we will use the special notation $t(\bs{x})$ and we will refer to it as the \textit{target} of  $\bs{x}$. 

The length of $\bs{x}$ is, by definition, the integer $l(\bs{x})=\sum_{i=0}^{n-1} l([x_i,x_{i+1}])=l([x_0,x_n])$, where $l([x,y])$ denotes the length of the interval $[x,y]$. For the set of $n$-chains in $\kal{P}$ of length $m$ we will use the notation $\kal{P}_{n,m}$. 

Let $\kal{P}_n$ denote the set of $n$-chains in $\kal{P}$. 
For every $\bs{x}\in \kal{P}_n$ we define $e_{\bs{x}}\in A_+^{\ot_\Bbbk n}$ by
\[
e_{\bs{x}}:=e_{x_0,x_1} \ot_\Bbbk e_{x_1,x_2} \ot_\Bbbk \dots \ot_\Bbbk e_{x_{n-1}, x_n}.
\]
To investigate the Koszulity of $A$ we need the following result.
\end{fact}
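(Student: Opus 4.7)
The result naturally suggested by the setup introduced in \S\ref{fa:not} is a decomposition of the normalized bar complex $\Omega_\ast(A)$ into the corresponding complex for $B$ and a piece governed by $M$. My plan is as follows.

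First I would partition the basis $\{e_{\bs{x}}\mid \bs{x}\in\kal{P}_n\}$ of $\Omega_n(A)$ according to whether $t(\bs{x})=t$. Since $t$ is maximal in $\kal{P}$, any chain whose image meets $\{t\}$ must have $t$ as its last entry; hence the basis elements with $t(\bs{x})\neq t$ are exactly those indexed by chains that lie entirely in $\kal{Q}$, and they span a copy of $\Omega_n(B)$. Let $\Omega_n'(A)$ denote the span of those $e_{\bs{x}}$ with $t(\bs{x})=t$.

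Next I would verify that both summands are subcomplexes. Each term of the bar differential $d_n$ contracts one adjacent pair $e_{x_i,x_{i+1}}\ot e_{x_{i+1},x_{i+2}}$ into $e_{x_i,x_{i+2}}$, leaving both the initial vertex $x_0$ and the target $x_n$ unchanged. Consequently,
\[
\Omega_\ast(A)=\Omega_\ast(B)\oplus \Omega_\ast'(A)
\]
as chain complexes, and this decomposition also respects the length grading inducing the subcomplexes $\Omega_\ast(A,m)$. I would then identify $\Omega_\ast'(A)$ with a shift of the bar-type complex computing $\Tor_\ast^B(S,M)$: writing a chain with $t(\bs{x})=t$ as $(x_0,\dots,x_{n-1},t)$ with $x_{n-1}<t$, the element $e_{\bs{x}}$ factors as $(e_{x_0,x_1}\ot\cdots\ot e_{x_{n-2},x_{n-1}})\ot e_{x_{n-1},t}$, and since every $e_{x_{n-1},t}$ with $x_{n-1}<t$ lies in $M=\sum_{u\in\kal{F}}Be_{u,t}$, this factorisation gives a natural $\Bbbk$-linear isomorphism $\Omega_n'(A)\cong \Omega_{n-1}(B)\ot_S M$.

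The main obstacle will be matching the differentials carefully. The rightmost contraction appearing in $d_n(e_{\bs{x}})$, namely $e_{x_{n-2},x_{n-1}}\cdot e_{x_{n-1},t}=e_{x_{n-2},t}\in M$, encodes the $B$-action on $M$ rather than a product in $B$, so under the identification above it plays the role of the module-action term in the bar complex computing $\Tor^B(S,M)$; the leftmost term of $d_n$ vanishes after tensoring with $R$ over $A$, in the same manner as the passage from $\beta_\ast(A)$ to $\Omega_\ast(A)$ described in \S\ref{sec-t(a)}. Once the signs are reconciled, passing to homology yields the bidegree-wise decomposition $\Tor_{n,m}^A(R,R)\cong \Tor_{n,m}^B(S,S)\oplus \Tor_{n-1,\ast}^B(S,M)$, where the second summand contributes in the shifted bidegrees reflecting the length $l([x_{n-1},t])$. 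This is precisely the tool that feeds into the eventual proof of Theorem \ref{te:aplicatie}, since controlling the Tor groups of $M$ over $B$ will translate the Koszulity of $B$ into the Koszulity of $A$.
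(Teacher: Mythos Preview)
The block \S\ref{fa:not} is purely notational and carries no proof in the paper; what you have actually sketched is the proof of the theorem that immediately follows it (Theorem~\ref{thm:toruri}), and your outline matches the paper's argument almost step for step: split the basis of $\Omega_n(A,m)$ according to whether the target of the chain equals $t$, observe that both pieces are subcomplexes because the bar differential preserves endpoints, and identify the piece $\Omega'_\ast$ of chains ending at $t$ with the bar complex computing $\Tor^B_{\ast-1}(S,M)$.

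Two small points where the paper is cleaner than your sketch. First, the paper phrases the identification as $\Omega'_n\cong(\beta^r_{n-1}(B)\ot_B M)_m$, using the \emph{right} bar resolution of $S$; this makes the differential match on the nose, with the rightmost contraction becoming exactly the module action of $B_+$ on $M$. Your remark that ``the leftmost term of $d_n$ vanishes after tensoring with $R$ over $A$'' is misplaced: in $\Omega_\ast(A)$ the leftmost term is already gone, and on $\Omega'_\ast$ the only thing to check is that every contraction either stays inside $B_+^{(n-1)}$ or is the $B$-action on $M$. Second, there is no shift in the internal degree $m$: the grading on $M$ itself (where $e_{x,t}$ sits in degree $l([x,t])$) already absorbs the length $l([x_{n-1},t])$, so one gets $\Tor^B_{n-1,m}(S,M)$ directly, not a shifted version. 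With these adjustments your plan is exactly the paper's proof.
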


\begin{theorem} \label{thm:toruri}
Keeping the notations and the definitions above, the following identity holds true:
\begin{equation}\label{ec:tor}
 \dr{Tor}_{n,m}^A(R,R)\cong\dr{Tor}_{n,m}^B(S,S) \op \dr{Tor}_{n-1,m}^B(S,M).
\end{equation}	
In particular, $A$ is Koszul if and only if $B$ is so and $\dr{Tor}_{n-1,m}^B(S,M)=0$, for all $n\neq m$.
\end{theorem}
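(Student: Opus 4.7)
The plan is to split the normalized bar complex $\Omega_\ast(A)$ into two subcomplexes, one computing $\Tor^B_\ast(S,S)$ and the other a shift of the bar complex for $\Tor^B_\ast(S,M)$, and then invoke Theorem~\ref{thm:kring}(6) to deduce the Koszulity statement.

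First, I would decompose the augmentation ideal using the maximality of $t$. As $R$-bimodules, $A_+ = B_+ \oplus M$, where $B_+$ is the span of the basis elements $e_{x,y}$ with $y\neq t$ and $M$ is the span of those with $y=t$; the summand $\Bbbk e_{t,t}$ of $R$ acts trivially on $B_+$ on both sides, so $B_+$ is naturally an $S$-bimodule. The crucial identity is $M\otimes_R A_+ = 0$, because $e_{x,t}\otimes e_{u,v} = e_{x,t}\otimes e_{t,t}e_{u,v}$ forces $u=t$, and no $e_{t,v}\in A_+$ exists as $t$ is maximal. It follows that in each tensor power $A_+^{(n)}$ a factor from $M$ can only occupy the last slot, and the base-change identities $B_+\otimes_R(-) = B_+\otimes_S(-)$ and $M\otimes_R(-) = M\otimes_{\Bbbk e_{t,t}}(-)$ (valid whenever the relevant actions factor through the smaller ring) produce the bimodule decomposition
\[
\Omega_n(A) \;=\; A_+^{(n)} \;=\; B_+^{(n)} \;\oplus\; B_+^{(n-1)}\otimes_S M \qquad (n\geq 1),
\]
where all tensors on the right are over $S$; at level zero, $\Omega_0(A) = R = S\oplus \Bbbk e_{t,t}$.

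Second, I would show that the bar differential splits along this decomposition. The first summand is preserved because every product $a_ia_{i+1}$ of two elements of $B_+$ still lies in $B_+$. The second summand $N_\ast$, with $N_n := B_+^{(n-1)}\otimes_S M$, is preserved as well: the inner terms $a_ia_{i+1}\in B_+$ for $i<n-1$ leave the $M$-slot untouched, while the last term $a_{n-1}a_n$ lies in $M$ by closure of $M$ under the left $B$-action. Hence $\Omega_\ast(A) = \Omega_\ast(B)\oplus N_\ast$ as chain complexes, and the splitting is homogeneous with respect to the internal degree $m$, so passing to homology yields $T_{n,m}(A) = T_{n,m}(B)\oplus H_{n,m}(N_\ast)$.

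Third, I would identify $H_\ast(N_\ast)$ with $\Tor^B_{\ast-1}(S,M)$. After the reindexing $\widetilde{N}_k := N_{k+1} = B_+^{(k)}\otimes_S M$, the inherited differential reads exactly as the normalized bar differential for computing $\Tor^B_\ast(S,M)$: the interior face maps correspond to the indices $i<n-1$, while the outer face map corresponds to the left $B$-action $a_{n-1}\cdot a_n$ on $M$. Equivalently, $\widetilde{N}_\ast$ is obtained from the right projective resolution $B_+^{(\ast)}\otimes_S B\to S$ of the right $B$-module $S$ by tensoring with $M$ over $B$. Thus $H_n(N_\ast)\simeq \Tor^B_{n-1}(S,M)$ for $n\geq 1$, compatibly with the internal grading, and combining with the previous step produces the isomorphism \eqref{ec:tor}.

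For the ``in particular'' clause, both $A$ and $B$ are strongly graded (incidence rings of finite graded posets are, since every interval admits a maximal chain of the appropriate length), so Theorem~\ref{thm:kring}(6) applies and $A$ is Koszul iff $T_{n,m}(A)=0$ for all $n\neq m$. By \eqref{ec:tor} this vanishing splits into the two simultaneous conditions $T_{n,m}(B)=0$ and $\Tor^B_{n-1,m}(S,M)=0$, which is the claimed equivalence. The main technical difficulty will be the careful bookkeeping of tensor products over $R$ versus $S$ and the one-step degree shift identifying $N_\ast$ with the bar complex for $\Tor^B(S,M)$; once $M\otimes_R A_+ = 0$ is established, the rest is essentially mechanical.
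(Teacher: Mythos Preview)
Your proof is correct and follows essentially the same route as the paper: both arguments split $\Omega_\ast(A)$ as $\Omega_\ast(B)\oplus N_\ast$ using the maximality of $t$, identify $N_\ast$ with the shifted bar complex $\beta^r_{\ast-1}(B)\otimes_B M$, and then invoke Theorem~\ref{thm:kring}(6). The only difference is packaging---the paper works with explicit chain bases $e_{\bs{x}}$ while you phrase the same splitting via the bimodule decomposition $A_+=B_+\oplus M$ and the vanishing $M\otimes_R A_+=0$.
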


\begin{proof} 
Recall that the index $m$ on the $\dr{Tor}$ spaces corresponds to the internal grading, that is the grading induced by the fact that $\kal{P}$ is a graded poset. By \S\ref{sec-t(a)}, one can compute $\dr{Tor}^A_{n,m}(R,R)$ as the $n$th homology group of the complex $\Od{\ast}{A,m}$. Note that $A_+$ is the direct sum of the $R$-bimodules $\Bbbk e_{x,y}$, where $x$ and $y$ are arbitrary elements in $\kal{P}$ such that $x<y$. Since $\Bbbk e_{x,y}\ot_R \Bbbk e_{x',y'}=0$, whenever $y\neq x'$, we get that 
$\Od{n}{A,m} = (A_+^{ n})_m$ is the direct sum of the $R$-bimodules $ \Bbbk e_{x_0,x_1} \ot_R \Bbbk e_{x_1,x_2} \ot_R \dots \ot_R \Bbbk e_{x_{n-1},x_n}$, where $\bs{x}=(x_0,\dots,x_n)$ is a chain in $\kal{P}_{n,m}$. Moreover, it is not difficult to see  that 
\[
 \Bbbk e_{x_0,x_1} \ot_R \Bbbk e_{x_1,x_2} \ot_R \dots \ot_R \Bbbk e_{x_{n-1},x_n}\cong{} \Bbbk e_{x_0,x_1} \ot_\Bbbk \Bbbk e_{x_1,x_2} \ot_\Bbbk \dots \ot_\Bbbk \Bbbk e_{x_{n-1}, x_n}=\Bbbk e_{\bs{x}}.
\]
for any $n$-chain of length $m$. Thus we can identify $\Omega_{n}(A,m)$ with the $\Bbbk$-linear subspace of the $n$th tensor power of $A_+$ which is spanned by all $e_{\bs{x}}$, with $\bs{x}\in\kal{P}_{n,m}$. Via this identification, the differential of $\Od{\ast}{A,m}$ is given by the formula:
\[ d_n(e_{\bs{x}}) = \sum_{i=1}^{n-1} (-1)^{i-1} e_{x_0,x_1} \ot_\Bbbk \dots \ot_\Bbbk e_{x_{i-1},x_{i+1}} \ot_\Bbbk \dots \ot_\Bbbk e_{x_{n-1}, x_n}.\]
Clearly,  we can identify in a similar way $\Od{n}{B,m}$ with the subspace of $A_+^{\ot_\Bbbk n}$ which  is spanned by the elements $e_{\bs{x}}$, with $\bs{x}\in\kal{Q}_{n,m}$. In this way, $\Od{\ast}{B,m}$ can be seen as a subcomplex of $\Od{\ast}{A,m}$.

Let $\Omega_{n}' $  denote  the linear subspace of $A_+^{\ot_\Bbbk n}$  generated by $e_ {\bs{x}}$, where $\bs{x}\in \kal{P}_{n,m}$ and $t(\bs{x})=t$. Note that $x_0,\dots,x_{n-1}$ are elements in $\kal{Q}$. Obviously, $\Omega_{\ast}' $ is a subcomplex of $\Omega_{\ast}(A,m) $. 

Since $t$ is a maximal element in $\kal{P}$, it follows that $\Omega_\ast(A,m)$ is the direct sum of $\Omega_\ast(B,m)$ and $\Omega_\ast'$. In particular, computing the homology of $\Omega_\ast(A,m)$ we get:
\begin{equation} \label{ec:toruri} 
\dr{Tor}^A_{n,m}(R,R) = \dr{Tor}_{n,m}^B(S,S) \op \dr{H}_n(\Omega'_{\ast}).\end{equation}
To compute the homology group from the above relation, let us note that $\Omega'_{\ast}\cong (\beta^r_{\ast - 1}(B) \ot_B M)_m$, where $\beta^r_\ast(B)$ denotes the normalized bar resolution of $S$ in the category of right $B$-modules.  Indeed,
\[
 \beta^r_{n - 1}(B) \ot_B M\cong (B_+^{\ot_S n-1}\ot_S B)\ot_B M \cong B_+^{\ot_S n-1}\ot_S M.
\]
Proceeding as above we identify the homogeneous component of degree $m$ of $B_+^{\ot_S n-1}\ot_S M$ with the subspace of $A_+^{\ot_\Bbbk n}$ generated by $e_{\bs{x}}\ot_\Bbbk e_{t(\bs{x}),t}$, where $\bs{x}\in \kal{Q}_{n-1}$ and $l(\bs{x})+l([t(\bs{x}),t])=m$. Since the latter linear space is precisely $\Omega_{n}'$ we have an isomorphism  $\Omega'_{n}\cong (\beta^r_{n - 1}(B) \ot_B M)_m$. It is easy to see that these isomorphisms are compatible with the differential maps, so they define an isomorphism of complexes. In conclusion,
\[
 \dr{H}_n(\Omega'_{\ast})\cong \dr{H}_n\big((\beta^r_{\ast -1}(B) \ot_B M)_m\big)
\cong\dr{Tor}_{n-1,m}^B(S,M).
\]
This completes the proof of the isomorphism \eqref{ec:tor}. The claim about the Koszulity of $A$ follows by Theorem \ref{thm:kring}.
\end{proof} 

We need another result which will be used when discussing applications.

\begin{lemma} \label{lema:beu}
Keeping the notations and the definitions above, we have
$\dr{Tor}^A_n(R, Ae_{u,u}) = 0$,  for all $u\in\kal{P}$ and $n>0$. Moreover, $\dr{Tor}_{n,m}^A(R,Ae_{u,u}) = 0$, for all $n \geq 0$ and $m \neq n$.
\end{lemma}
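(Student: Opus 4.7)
The plan is to exploit the idempotent structure of $R$ inside $A$ to see that $Ae_{u,u}$ is a projective left $A$-module, then handle the grading by a direct calculation.

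First I would observe that the elements $\{e_{v,v}\}_{v\in\kal{P}}$ form a complete set of orthogonal idempotents in $A$ with $\sum_{v\in\kal{P}}e_{v,v}=1_A$, so as a left $A$-module $A$ decomposes as
\[
A=\bigoplus_{v\in\kal{P}}Ae_{v,v}.
\]
In particular, $Ae_{u,u}$ is a direct summand of the free left $A$-module $A$, hence projective. This immediately gives $\Tor_n^A(R,Ae_{u,u})=0$ for all $n>0$, taking care simultaneously of the ungraded statement and of every graded component $\Tor_{n,m}^A(R,Ae_{u,u})$ in positive homological degree.

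It remains to verify the graded statement for $n=0$, i.e. that $\Tor_{0,m}^A(R,Ae_{u,u})=0$ for every $m>0$. For this I would compute $R\otimes_A Ae_{u,u}$ directly. Since $R=A/A_+$, we have
\[
R\otimes_A Ae_{u,u}\;\cong\;Ae_{u,u}\,\big/\,A_+\cdot Ae_{u,u}.
\]
The module $Ae_{u,u}$ has $\Bbbk$-basis $\{e_{x,u}\mid x\leq u\}$, and $e_{u,u}$ is the unique basis element of internal degree $0$. For any $x<u$, since $\kal{P}$ is graded there is a predecessor $y$ of $u$ (or any intermediate element) with $x<y\leq u$, so $e_{x,y}\in A_+$ and $e_{x,u}=e_{x,y}\cdot e_{y,u}$ lies in $A_+\cdot Ae_{u,u}$. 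Thus $R\otimes_A Ae_{u,u}$ is one-dimensional, spanned by the class of $e_{u,u}$, which sits in internal degree $0$. Hence $\Tor_{0,m}^A(R,Ae_{u,u})=0$ for all $m\neq 0$, completing the proof.

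There is no real obstacle here; the only point that requires a moment of care is the identification of $R\otimes_A Ae_{u,u}$ with $\Bbbk e_{u,u}$, which uses only that $\kal{P}$ is graded so every $e_{x,u}$ with $x<u$ factors nontrivially through $A_+$.
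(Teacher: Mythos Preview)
Your proof is correct and follows essentially the same route as the paper: projectivity of $Ae_{u,u}$ via the idempotent decomposition, then a direct computation of $R\otimes_A Ae_{u,u}$ for the $n=0$ case. One small point: your factorization argument for $e_{x,u}\in A_+\cdot Ae_{u,u}$ is more complicated than necessary and, as stated, breaks down when $x$ is itself a predecessor of $u$ (no strict intermediate $y$ exists). The paper simply observes $A_+\cdot Ae_{u,u}=A_+e_{u,u}$, and indeed $e_{x,u}=e_{x,u}\cdot e_{u,u}$ with $e_{x,u}\in A_+$ already does the job for every $x<u$; gradedness of $\kal{P}$ plays no role here, contrary to your closing remark.
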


\begin{proof}
	Let $n>0$. Remark that $Ae_{u,u}$ is a projective $A$-module, because $e_{u,u}$ is an idempotent in $A$, so $A=Ae_{u,u} \op A(1-e_{u,u})$. Since $\dr{Tor}_{n,m}^A(R,Ae_{u,u})$ is a direct summand in $\dr{Tor}_n^A(R,Ae_{u,u})=0$, it follows that the former $R$-module is also zero, for all $m$.
	
	We are left to proving that $\dr{Tor}_{0,m}^A(R,Ae_{u,u}) = 0$, for all $m > 0$. This follows by the relation $\Tor_{0,m}^A(R,Ae_{u,u})=(R \ot_A Ae_{u,u})_m$ and the following isomorphisms: 
\[ 
	(R \ot_A Ae_{u,u})_m \cong \Big( \frac{Ae_{u,u}}{A_+(Ae_{u,u})}\Big)_m \cong \Big( \frac{Ae_{u,u}}{A_+e_{u,u}}\Big)_m \cong (R e_{u,u})_m=0. \qedhere
\] 
\end{proof}

\begin{remark}
 The relation $\dr{Tor}_{n,m}^A(R,Ae_{u,u}) = 0$ holds for any poset $\kal{P}$ and every $u\in\kal{P}$, provided that  $m \neq n$. Then, keeping the notation from \S\ref{fa:not}, we also have  $\dr{Tor}_{n,m}^B(S,Be_{s,s}) = 0$, for $s\in\kal{Q}$ and $m\neq n$.
\end{remark}

\begin{lemma}\label{le:tor=0}
 Keeping the notation from \S\ref{fa:not}, we have $\dr{Tor}_{n-1,m}^B(S,Be_{u,t})=0$,  for every $u\in\kal{F}$ and for all $m\neq n$.
\end{lemma}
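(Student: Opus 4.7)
The plan is to show that $Be_{u,t}$ is isomorphic, as a graded left $B$-module, to $Be_{u,u}$ up to a shift of internal degree by one, and then to invoke (the $B$-version of) Lemma \ref{lema:beu}.

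First I would describe the module $Be_{u,t}$ explicitly. Using the multiplication rule $e_{x,y}e_{u,t}=\delta_{y,u}e_{x,t}$ and the fact that $B$ is spanned by the $e_{x,y}$ with $x,y\in\kal{Q}$, one sees that
\[
Be_{u,t}=\bigoplus_{x\leq u}\Bbbk\,e_{x,t},
\]
where the index $x$ runs over $\kal{Q}$ (which is automatic since $x\leq u<t$). Comparing this with $Be_{u,u}=\bigoplus_{x\leq u}\Bbbk\,e_{x,u}$, the natural candidate for a morphism is right multiplication by $e_{u,t}$,
\[
\varphi:Be_{u,u}\longrightarrow Be_{u,t},\qquad \varphi(b)=be_{u,t}.
\]
Since right multiplication commutes with the left $B$-action, $\varphi$ is a morphism of left $B$-modules, and it sends the basis element $e_{x,u}$ to $e_{x,t}$ for every $x\leq u$, so it is bijective.

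Next, I would track the internal grading. The element $e_{x,u}$ sits in internal degree $l([x,u])$, while $e_{x,t}$ sits in internal degree $l([x,t])=l([x,u])+l([u,t])$. Because $u\in\kal{F}$ is a predecessor of $t$ in $\kal{P}$, we have $l([u,t])=1$, and hence $\varphi$ raises internal degree by exactly one. Thus $Be_{u,t}$ is isomorphic to $Be_{u,u}$ as graded $B$-modules up to the shift $m\mapsto m-1$, which gives the identification
\[
\Tor_{n-1,m}^B\bigl(S,\,Be_{u,t}\bigr)\;\cong\;\Tor_{n-1,\,m-1}^B\bigl(S,\,Be_{u,u}\bigr).
\]

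Finally, I would apply the analogue of Lemma \ref{lema:beu} for the incidence ring $B$ of the graded poset $\kal{Q}$, as noted in the remark preceding the statement: for every $u\in\kal{Q}$ and every pair of indices with $m'\neq n'$, one has $\Tor_{n',m'}^B(S,Be_{u,u})=0$. Applying this with $n'=n-1$ and $m'=m-1$, the right-hand side above vanishes whenever $m-1\neq n-1$, i.e.\ whenever $m\neq n$, which is precisely the desired conclusion. I do not anticipate any serious obstacle: the only subtlety is the grading shift, so one must be careful to check that the inequality $m\neq n$ matches the condition $m-1\neq n-1$, which it does.
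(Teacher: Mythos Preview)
Your proof is correct and follows essentially the same approach as the paper: you identify $Be_{u,t}$ with $Be_{u,u}$ via the degree-$1$ isomorphism $e_{x,u}\mapsto e_{x,t}$ (which you realize as right multiplication by $e_{u,t}$), obtain $\Tor_{n-1,m}^B(S,Be_{u,t})\cong\Tor_{n-1,m-1}^B(S,Be_{u,u})$, and then conclude via Lemma~\ref{lema:beu} for $B$. The only cosmetic difference is that you make the $B$-linearity of the map more explicit by describing it as right multiplication.
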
  

\begin{proof}
 The sets $\{e_{x,u}\mid x\leq u\}$ and  $\{e_{x,t}\mid x\leq u\}$ are bases of the  vector spaces $Be_{u,u}$ and $Be_{u,t}$, respectively. Hence the unique linear map given by  $e_{x,u} \mapsto e_{x,t}$, for all $x \leq u$, is bijective. On the other hand, one sees easily that this map  is a morphism of $B$-modules.  With respect to the internal grading, since $\deg(e_{x,t})=\deg(e_{x,u})+1$, the above map induces an isomorphism of degree +1 between the graded modules  $Be_{u,u}$ and $Be_{u,t}$. This implies that:
\[ 
\dr{Tor}_{n-1,m}^B(S,Be_{u,t})\simeq \dr{Tor}_{n-1,m-1}^B(S,Be_{u,u}).
\]
We conclude the proof by applying Lemma \ref{lema:beu} and taking also into account the preceding Remark. 
\end{proof}

\begin{fact}[The condition ($\dagger$).]\label{fa:condition} 
 Under the standing notations and assumptions \S\ref{fa:not}, our next goal is to provide examples of graded posets $\kal{P}$ such that the $R$-ring $A$ is Koszul. In order to do that we want to apply Theorem \ref{thm:toruri}, so we need conditions that ensures the vanishing of $\Tor_{n-1,m}^B(S,M)$, for all $m\neq n$. 
 
We will say that the set $\kal{F}$ \textit{satisfies the condition} $(\dagger)$ if and only if either $\kal{F}$ is a singleton, or there is a common predecessor $s$ of all elements in $\kal{F}$ such that $s$ is the infimum of every couple of distinct elements in $\kal{F}$. Note that $s=\inf \{u,v\}$ if and only if:
\begin{equation} \label{ec:conditie_poset} 
Be_{u,t}\cap Be_{v,t}= Be_{s,t}.
\end{equation}
Although somehow cumbersome and unnatural at this point, the use of this condition will become evident throughout the proof of the next result. Moreover, let us mention that $(\dagger)$ is a purely combinatorial condition, depending only on the structure of the poset $\kal{Q}$ and the set $\kal{F}$. 
\end{fact}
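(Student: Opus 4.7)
The plan is to prove the asserted equivalence $s=\inf\{u,v\}\iff Be_{u,t}\cap Be_{v,t}=Be_{s,t}$ by a direct computation, reducing both sides to statements about downsets in $\kal{P}$. First, I would give explicit monomial descriptions of the three principal $B$-submodules that appear. From the multiplication rule $e_{x,y}\cdot e_{u,t}=\delta_{y,u}e_{x,t}$ recorded in \S\ref{fa:incidence}, together with the observation that any $b\in B$ is a $\Bbbk$-linear combination of basic idempotents $e_{x,y}$, one obtains the $\Bbbk$-basis $\{e_{x,t}\mid x\leq u\}$ of $Be_{u,t}$, and analogously $\{e_{x,t}\mid x\leq v\}$ for $Be_{v,t}$ and $\{e_{x,t}\mid x\leq s\}$ for $Be_{s,t}$. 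Each of these is therefore a subspace of $A$ spanned by a subset of the canonical basis $\{e_{a,b}\}_{a\leq b}$ of $A$, indexed by the principal downsets of $u$, $v$ and $s$.

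Next, because distinct elements of the canonical basis of $A$ are linearly independent, the intersection of two such monomial subspaces is the span of the set-theoretic intersection of their indexing sets. This immediately gives
\[Be_{u,t}\cap Be_{v,t}=\mathrm{span}_{\Bbbk}\{e_{x,t}\mid x\leq u\text{ and }x\leq v\},\]
and hence \eqref{ec:conditie_poset} is equivalent to the purely combinatorial equality of downsets
\[\{x\in\kal{P}\mid x\leq u\text{ and }x\leq v\}=\{x\in\kal{P}\mid x\leq s\}.\]

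Finally, I would check that this equality of downsets is equivalent to $s=\inf\{u,v\}$. The forward direction is immediate from the universal property of the infimum: $s\leq u$ and $s\leq v$ give one inclusion, while the defining property of $\inf\{u,v\}=s$ (every common lower bound of $u,v$ is dominated by $s$) gives the reverse inclusion. For the converse, evaluating the equality at $x=s$ (which lies in the right-hand side since $s\leq s$) shows that $s$ is a common lower bound of $u$ and $v$, and applying it to an arbitrary common lower bound $x$ of $\{u,v\}$ forces $x\leq s$, so $s$ is the greatest such, i.e.\ $s=\inf\{u,v\}$.

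The argument poses no real obstacle; it is essentially basis bookkeeping combined with the definition of $\inf$. The only step that deserves to be stated carefully is the first one, namely the verification that $Be_{u,t}$ admits the claimed monomial basis and that the intersection of two such monomial subspaces in $A$ is computed set-theoretically on the indexing downsets; once this is in hand, the equivalence of \eqref{ec:conditie_poset} with $s=\inf\{u,v\}$ is just the observation that principal downsets in a poset determine, and are determined by, their generators.
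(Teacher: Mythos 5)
Your proof is correct and takes essentially the same route as the paper: the paper records the monomial basis $\{e_{x,t}\mid x\leq u\}$ of $Be_{u,t}$ in Lemma \ref{le:tor=0}, and in the proof of Theorem \ref{te:aplicatie} it verifies the (generalized) identity $Be_{u,t}\cap\sum_{x\in\kal{F}'}Be_{x,t}=Be_{s,t}$ by precisely the coefficient bookkeeping on these bases that you carry out. Your reduction to an equality of principal downsets, followed by the universal property of the infimum, is the same computation, merely stated more explicitly than in the paper.
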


\begin{theorem} \label{te:aplicatie}
Let $\kal{P}$ be a graded poset and let $t\in\kal{P}$ denote a maximal element. Let $A$ and $B$ denote the incidence algebras of $\kal{P}$ and $\kal{Q}:=\kal{P} \setminus \{t\}$, respectively. If the set $\kal{F}$ of all predecessors of $t$ in $\kal{P}$ satisfies the condition $(\dagger)$, then $A$ is a Koszul $R$-ring if and only if $B$ is a Koszul $S$-ring.	
\end{theorem}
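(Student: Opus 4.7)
The plan is to combine Theorem \ref{thm:toruri} with a Mayer--Vietoris style induction on the cardinality of $\kal{F}$. By Theorem \ref{thm:toruri}, it suffices to show that condition $(\dagger)$ alone implies $\Tor_{n-1,m}^B(S,M)=0$ for every $m\neq n$, or equivalently that $\Tor_{n,m}^B(S,M)=0$ for every $m\neq n+1$. If $\kal{F}=\{u\}$ is a singleton, then $M=Be_{u,t}$ and this reduces immediately to Lemma \ref{le:tor=0}. Otherwise, fix the common predecessor $s$ provided by $(\dagger)$. Since $s$ is a predecessor of each $u_i\in\kal{F}$ and each $u_i$ is a predecessor of $t$, the graded poset structure forces $l([s,t])=2$. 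The assignment $e_{x,s}\mapsto e_{x,t}$ defines a left $B$-module isomorphism $Be_{s,s}\to Be_{s,t}$ that shifts the internal degree by $+2$; together with Lemma \ref{lema:beu} and the remark following it, this yields the preliminary vanishing
\[ \Tor_{n,m}^B(S,Be_{s,t})=0\quad\text{whenever}\quad m\neq n+2. \]

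The substantive step is to prove, by induction on $|F'|$, that $\Tor_{n,m}^B(S,M_{F'})=0$ for every nonempty $F'\subseteq\kal{F}$ and every $m\neq n+1$, where $M_{F'}:=\sum_{u\in F'}Be_{u,t}$. The base case $|F'|=1$ is Lemma \ref{le:tor=0}. For the inductive step, pick $u_k\in F'$ and set $N:=M_{F'\setminus\{u_k\}}$. The crucial consequence of $(\dagger)$ is the identity $N\cap Be_{u_k,t}=Be_{s,t}$: any $x\in\kal{Q}$ with $x\leq u_k$ and $x\leq u_i$ for some $i\neq k$ must satisfy $x\leq\inf\{u_k,u_i\}=s$, and a comparison of the canonical $\Bbbk$-bases of the two sides confirms the equality. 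This produces the short exact sequence of graded $B$-modules
\[ 0\longrightarrow Be_{s,t}\longrightarrow N\oplus Be_{u_k,t}\longrightarrow M_{F'}\longrightarrow 0, \]
whose long exact $\Tor$-sequence sandwiches $\Tor_{n,m}^B(S,M_{F'})$ between $\Tor_{n,m}^B(S,N)\oplus\Tor_{n,m}^B(S,Be_{u_k,t})$ and $\Tor_{n-1,m}^B(S,Be_{s,t})$. For $m\neq n+1$, the first summand vanishes by the inductive hypothesis (the subset $F'\setminus\{u_k\}$ still satisfies $(\dagger)$ with the same $s$, or reduces to the singleton base case), the second by Lemma \ref{le:tor=0}, and the third by the preliminary vanishing above, since $m\neq n+1$ is the same as $m\neq (n-1)+2$. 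Hence $\Tor_{n,m}^B(S,M_{F'})=0$, closing the induction; taking $F'=\kal{F}$ delivers the desired vanishing for $M$.

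The main difficulty is the identification $N\cap Be_{u_k,t}=Be_{s,t}$ and the choice of the induction hypothesis. This is where condition $(\dagger)$ is genuinely used: the uniform infimum $s$ is exactly what forces every pairwise intersection of the generating summands $Be_{u_i,t}$ to collapse to the same module $Be_{s,t}$, whose $\Tor$ groups are tightly controlled via Lemma \ref{lema:beu}. Once this structural identity is in hand, the remainder is a routine Mayer--Vietoris long exact sequence computation.
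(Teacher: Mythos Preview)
Your proof is correct and follows essentially the same strategy as the paper's: induction on $|\kal{F}|$, the key identification of the intersection as $Be_{s,t}$ via condition $(\dagger)$, and the vanishing supplied by Lemmas \ref{le:tor=0} and \ref{lema:beu}. The only difference is cosmetic: you package the inductive step into a single Mayer--Vietoris short exact sequence $0\to Be_{s,t}\to N\oplus Be_{u_k,t}\to M_{F'}\to 0$, whereas the paper first passes to the quotient $\overline{M}=M/M'\cong Be_{u,t}/Be_{s,t}$ and then applies a second short exact sequence $0\to Be_{s,t}\to Be_{u,t}\to\overline{M}\to 0$.
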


\begin{proof} 
As we have noticed, in view of Theorem \ref{thm:toruri}, we have to check that $\dr{Tor}_{n-1,m}^B(S,M)=0$, for all $m\neq n$.
We proceed by induction on the cardinality of $\kal{F}$, which we denote by $f$.

For $f=1$, we have $M=Be_{u,t}$, where $u$ is the unique element of $\kal{F}$. Hence we conclude the proof of the basic case by applying Lemma \ref{le:tor=0}. Note that in this case we do not need the condition ($\dagger$).

Assume now that the result is true for any set $\kal{F}'$ of cardinality $f$ and take $\kal{F}$ a set with $f+1$ elements which satisfies the condition ($\dagger$). Let $s$ denote the common predecessor of the elements of $\kal{F}$. We pick $u \in \kal{F}$ and we take $\kal{F}': = \kal{F}\setminus\{u\}$. It is clear that $\kal{F}'$ satisfies the  condition ($\dagger$), with respect to the same $s$. Therefore, if $M':=\sum_{v \in \kal{F}'} Be_{v,t}$, then $\dr{Tor}_{n-1,m}^B(S,M')=0$, for all $n \neq m$.

Let $\overline{M}:=M/M'$. Using the exact sequence:
\[ \dots \to \dr{Tor}_{n-1,m}^B(S,M') \to \dr{Tor}_{n-1,m}^B(S,M) \to \dr{Tor}_{n-1,m}^B(S,\overline{M}) \to \dots,\]
 it suffices to prove that $\dr{Tor}_{n-1,m}^B(S,\overline{M})=0$ and the conclusion would follow. For this, note that we can make the following identifications:
\[ \overline{M} \cong \frac{Be_{u,t}}{Be_{u,t} \cap \displaystyle\sum_{x \in \kal{F}'} Be_{x,t}} \cong \frac{Be_{u,t}}{Be_{s,t}}.\]
The last isomorphism holds true by an immediate generalization of the relation (\ref{ec:conditie_poset}). Indeed, if we take an element $w$ in $Be_{u,t} \cap \sum_{x \in \kal{F}'} Be_{x,t}$, then  $w=\sum_{y \leq u} \alpha_y e_{y,t} = \sum_{x \in \kal{F}'} \sum_{z \leq x} \beta_{z,x}e_{z,t}$. For every $y$ such that $\alpha_y\neq 0$, the element $e_{y,t}$ must also appear in the double sum with a nonzero coefficient. In particular, there must exist $x\in\kal{F}'$ such that $y\leq x$.  Since  $y\leq u$ as well,  we get that  $e_{y,t}\in Be_{u,t}\cap Be_{x,t}=Be_{s,t}$. Thus $w\in Be_{s,t}$. In conclusion,  $Be_{u,t} \cap \sum_{x \in \kal{F}'} Be_{x,t}$ is a submodule of $Be_{s,t}$. The other inclusion is obvious, so the above intersection and $Be_{s,t}$ coincide, as we claimed. 

Let us consider  the following exact sequence:
\[ \dots \to \dr{Tor}_{n-1,m}^B(S,Be_{u,t}) \to \dr{Tor}_{n-1,m}^B(S,\frac{Be_{u,t}}{Be_{s,t}}) \to \dr{Tor}_{n-2,m}^B(S,Be_{s,t}) \to \dots\]
We already know that $ \dr{Tor}_{n-1,m}^B(S,Be_{u,t}) =0$, cf. Lemma \ref{le:tor=0}. To conclude the proof of the Theorem it remains to show that  $ \dr{Tor}_{n-2,m}^B(S,Be_{s,t})=0$.

Proceeding as in the proof of Lemma \ref{le:tor=0}, one show that there is an isomorphism of degree $2$ between the graded $B$-modules $Be_{s,s}$ and $Be_{s,t}$, which is given by  
$e_{x,s} \mapsto e_{x,t}$ for all $x \leq s$. Thus:
\[
\dr{Tor}_{n-2,m}^B(S,Be_{s,t}) \simeq \dr{Tor}_{n-2,m-2}^B(S,Be_{s,s}).
\]
Hence, by Lemma \ref{lema:beu}, we get $\dr{Tor}_{n-2,m}^B(S,Be_{s,t})=0$.
\end{proof}

Recall that the dual poset of $\kal{P}$ coincides, as a set, with $\kal{P}$. On the other hand, $x$ is less than $y$ in the dual poset of  $\kal{P}$ if and only if $x$ is greater than $y$ in  $\kal{P}$. 

For a subset  $\kal{F}\subseteq\kal{P}$, the condition $(\dagger)$ with respect to the dual partial order relation, can be stated as follows. We will say that $\kal{F}$ \textit{satisfies the condition} $(\ddagger)$ if and only if either $\kal{F}$ is a singleton or there is a common successor $s$ of all elements in $\kal{F}$ such that $s=\sup\{u,v\}$, for all $ u \neq v \in \kal{F}$. 

Since an $R$-ring is Koszul if and only if its opposite $R^{op}$-ring is so, working with the dual poset of $\kal{P}$,  we get the theorem below. 
\begin{theorem} \label{te:aplicatie_dual}
Let $\kal{P}$ be a graded poset and let $t\in\kal{P}$ denote a minimal element. Let $A$ and $B$ denote the incidence algebras of $\kal{P}$ and $\kal{Q}:=\kal{P} \setminus \{t\}$, respectively. If the set $\kal{F}$ of all successors of $t$ in $\kal{P}$ satisfies the condition $(\ddagger)$, then $A$ is a Koszul $R$-ring if and only if $B$ is a Koszul $S$-ring.	
\end{theorem}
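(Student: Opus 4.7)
The plan is to obtain Theorem \ref{te:aplicatie_dual} directly from Theorem \ref{te:aplicatie} by passing to the dual poset. The whole argument rests on three compatibilities: incidence rings versus opposite rings, the condition $(\ddagger)$ versus $(\dagger)$, and Koszulity versus taking opposites.

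First, I would note that the incidence $R$-ring of the dual poset $\kal{P}^{op}$ is naturally anti-isomorphic to the incidence $R$-ring of $\kal{P}$. More precisely, the linear map $A \to (\Bbbk^a[\kal{P}^{op}])^{op}$ sending $e_{x,y}$ (with $x \leq y$ in $\kal{P}$) to $\tilde{e}_{y,x}$ (with $y \leq^{op} x$ in $\kal{P}^{op}$) is an isomorphism of graded $R$-rings, where $R=\sum_{x\in\kal{P}}\Bbbk e_{x,x}$ is its own opposite. The analogous fact holds for $\kal{Q}$ and $B$.

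Second, because the order is reversed, minimal elements of $\kal{P}$ become maximal elements of $\kal{P}^{op}$, and successors of $t$ in $\kal{P}$ become predecessors of $t$ in $\kal{P}^{op}$. Under this dualization the relation "$s$ is a common successor of $\kal{F}$ with $s=\sup\{u,v\}$" turns into "$s$ is a common predecessor of $\kal{F}$ with $s=\inf\{u,v\}$". Hence $\kal{F}$ satisfies $(\ddagger)$ in $\kal{P}$ if and only if $\kal{F}$ satisfies $(\dagger)$ in $\kal{P}^{op}$.

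Third, I would invoke the fact that a connected graded $R$-ring $A$ is Koszul if and only if its opposite $A^{op}$ is Koszul as an $R$-ring. This follows from any of the symmetric characterizations in Theorem \ref{thm:kring}: for instance, from the graded isomorphism $\Tor_{n,m}^A(R,R) \cong \Tor_{n,m}^{A^{op}}(R,R)$ combined with equivalence (6), or directly from the observation that the Koszul property is preserved when one passes from projective resolutions of left $A$-modules to projective resolutions of right $A$-modules.

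Assembling these three facts: apply Theorem \ref{te:aplicatie} to the graded poset $\kal{P}^{op}$ with maximal element $t$ and set of predecessors $\kal{F}$ satisfying $(\dagger)$. The conclusion is that $\Bbbk^a[\kal{P}^{op}]$ is Koszul if and only if $\Bbbk^a[\kal{Q}^{op}]$ is. Translating back via the first and third compatibilities above yields that $A$ is Koszul if and only if $B$ is Koszul, as required. The main subtlety is making sure that the definition of Koszulity and the condition $(\dagger)$ translate cleanly under order-reversal, but both are routine once the duality $\Bbbk^a[\kal{P}^{op}] \cong A^{op}$ is in place.
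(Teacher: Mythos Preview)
Your proposal is correct and follows exactly the approach the paper takes: the paper simply remarks that $(\ddagger)$ is the condition $(\dagger)$ for the dual poset and that an $R$-ring is Koszul if and only if its opposite $R^{op}$-ring is so, then invokes Theorem~\ref{te:aplicatie}. You have merely spelled out the three compatibilities (incidence ring of the dual $\cong$ opposite ring, $(\ddagger)\leftrightarrow(\dagger)$, and Koszulity under opposites) that the paper leaves implicit.
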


We are now able to describe an algorithm, based on  Theorem \ref{te:aplicatie} and Theorem \ref{te:aplicatie_dual}, which will help us produce examples of Koszul posets. 

\begin{fact}[Algorithm.] \label{fa:algorithm}
Start with a Koszul poset $\kal{Q}$ (\textit{e.g.} a finite set regarded as a poset with respect to the trivial partial order) and apply one of the constructions below:
\begin{enumerate} 
  \item Choose $u\in \kal{Q}$ and adjoin a new element $t$ to $\kal{Q}$. On  $\kal{P}:=\kal{Q}\cup\{t\}$ take the partial order relation defined such that $t$ is a successor of $u$, but it is not comparable with other elements of $\kal{Q}$. 
 
 \item Choose $u\in \kal{Q}$ and adjoin a new element $t$ to $\kal{Q}$. On  $\kal{P}:=\kal{Q}\cup\{t\}$ take the partial order relation defined such that $t$ is a predecessor of $u$, but it is not comparable with other elements of $\kal{Q}$. 
 
 \item Choose a subset $\kal{F}\subseteq\kal{Q}$ that satisfies the condition $(\dagger)$ and adjoin a new element $t$ to $\kal{Q}$. On  $\kal{P}:=\kal{Q}\cup\{t\}$ take the partial order relation defined such that $t$ is a successor of all elements of $\kal{F}$, but it is not comparable with other elements of $\kal{Q}$. 
 
 \item Choose a subset $\kal{F}\subseteq\kal{Q}$ that satisfies the condition $(\ddagger)$ and adjoin a new element $t$ to $\kal{Q}$. On  $\kal{P}:=\kal{Q}\cup\{t\}$ take the partial order relation defined such that $t$ is a predecessor of all elements of $\kal{F}$, but it is not comparable with other elements of $\kal{Q}$.
 \end{enumerate}
 Repeat any of the constructions (1)--(4) finitely many times, at each iteration $\kal{Q}$ being replaced with $\kal{P}$. The output poset is Koszul.
\end{fact}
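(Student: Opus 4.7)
The plan is a straightforward induction on the number $N$ of iterations of the construction. The base case $N=0$ holds by hypothesis: the input poset $\kal{Q}$ is Koszul. For the inductive step it suffices to show that each single application of one of (1)--(4) to a Koszul graded poset produces a Koszul graded poset.

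First I would reduce (1) and (2) to the respective singleton cases of (3) and (4). Indeed, when $\kal{F}=\{u\}$ both conditions $(\dagger)$ and $(\ddagger)$ hold trivially by the ``$\kal{F}$ is a singleton'' clause in \S\ref{fa:condition}, so constructions (1)--(2) are genuine specializations of (3)--(4). Thus the whole inductive step reduces to verifying (3) and (4).

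For (3) the element $t$ is by construction a maximal element of $\kal{P}=\kal{Q}\cup\{t\}$ whose set of predecessors is exactly $\kal{F}$, and $\kal{F}$ satisfies $(\dagger)$ by assumption. Before invoking Theorem \ref{te:aplicatie}, I would verify that $\kal{P}$ is a graded poset: the only intervals of $\kal{P}$ that are not already intervals of $\kal{Q}$ have the form $[x,t]$ with $x\leq u$ for some $u\in\kal{F}$, and every maximal chain in such an interval extends a maximal chain of $[x,u]_{\kal{Q}}$ by the covering step $u<t$. Using that by $(\dagger)$ there is a common (immediate) predecessor $s$ of all elements of $\kal{F}$, one deduces from the gradedness of $\kal{Q}$ that every element of $\kal{F}$ has the same rank in $\kal{Q}$, and hence that these lengths depend only on $x$, not on the choice of $u$. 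Once $\kal{P}$ is known to be graded, Theorem \ref{te:aplicatie} applies and gives that the incidence ring of $\kal{P}$ is Koszul if and only if that of $\kal{Q}$ is. Case (4) is exactly dual: $t$ is a minimal element whose set of successors is $\kal{F}$, and Theorem \ref{te:aplicatie_dual} is applied after verifying gradedness of $\kal{P}$ by the analogous argument using $(\ddagger)$.

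Iterating these one-step equivalences $N$ times chains together $N$ ``Koszul iff Koszul'' assertions starting from the Koszul input $\kal{Q}$, so the output poset is Koszul. The only step that is not a direct citation of an earlier result is the verification that $\kal{P}$ remains graded after a non-singleton construction; I expect this minor combinatorial check to be the main, though quite modest, obstacle.
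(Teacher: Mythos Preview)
Your proposal is correct and follows exactly the approach the paper intends: the paper does not supply a separate proof for this Fact, presenting it instead as an immediate consequence of Theorem~\ref{te:aplicatie} and Theorem~\ref{te:aplicatie_dual}, which is precisely your inductive step. Your additional care in checking that $\kal{P}$ remains graded (using that the common predecessor $s$ in~$(\dagger)$ forces all elements of $\kal{F}$ to have the same rank) is a detail the paper leaves implicit but which your argument handles correctly.
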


\begin{fact}[Planar  tilings.] \label{fa:tailins} The first examples that we will discuss are those posets whose Hasse diagrams look like planar tilings with square tiles. We start by considering the simplest posets of this type.

By definition a poset  $\mathcal{T}=\{s_\mathcal{T},u_\mathcal{T},v_\mathcal{T},t_\mathcal{T}\}$ is a \textit{tile} if its elements  are the points of the plane $\mathbb{R}^2$ given by $s_\mathcal{T}=(p,q-1)$, $u_\mathcal{T}=(p-1,q)$, $v_\mathcal{T}=(p+1,q)$ and $t_\mathcal{T}=(p,q+1)$ , for some $p,q\in\mathbb{Z}$. The order relation, is defined such that $s_\mathcal{T}<u_\mathcal{T}<t_\mathcal{T}$ and $s_\mathcal{T}<v_\mathcal{T}<t_\mathcal{T}$, but $u_\mathcal{T}$ and $v_\mathcal{T}$ are not comparable. Note that $\mathcal{T}$ is the set of vertices of a square, which will be regarded as the Hasse diagram of $\mathcal{T}$. 

One can show that $\mathcal{T}$ is Koszul using our algorithm. One starts with the poset $\kal{Q}=\{s_\mathcal{T}\}$, which clearly is Koszul.  Then, applying twice the first construction of the algorithm, one adjoins  $u_\mathcal{T}$ and $v_\mathcal{T}$ to $\kal{Q}$. The resulting poset $\kal{P}=\{s_\mathcal{T},u_\mathcal{T},v_\mathcal{T}\}$ is Koszul. Clearly, in $\kal{P}$ we have $s_\mathcal{T}=\inf\{u_\mathcal{T},v_\mathcal{T}\}$ and $u_\mathcal{T}$ and $v_\mathcal{T}$ are not comparable. In order to adjoin $t_\mathcal{T}$ to $\kal{P}$ and to conclude that $\mathcal{T}$ is Koszul, one uses the third construction, setting $\kal{F}=\{u_\mathcal{T},v_\mathcal{T}\}$. Obviously, this set satisfies the condition $(\dagger)$.

A finite union  of tiles will be called \textit{planar tiling}. By definition, $x$ is a predecessor of $y$ in a planar tiling if and only if $x$ is a predecessor of $y$ in one of its tiles. 

Planar tilings are not Koszul in general. For instance, the poset $\kal{P}$ whose Hasse diagram is represented in Figure~\ref{fig:non-koszul} is not  Koszul, since $\Tor_{2,3}^A(R,R)\neq 0$, where $A$ denotes the incidence ring of $\kal{P}$. 
\begin{figure}[h]
 \[
 \includegraphics{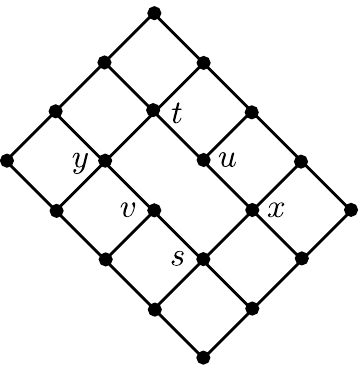} 
\]
\caption{Example of planar tiling which is not Koszul.}\label{fig:non-koszul}
\end{figure}\\
Indeed, $\omega :=e_{s,y}\ot e_{y,t}-e_{s,x}\ot e_{x,t}$ is a $2$-cycle in $\Od{\ast}{A,3}$. Let us assume that  $\omega$ is a $2$-coboundary. Since   $\omega$ has internal degree $3$, it can be written in the following way:
\[
 \omega:=d_3\big (\sum_{(z',z'')\in \kal{S}}\alpha_{z',z''}e_{s,z'}\ot e_{z',z''}\ot e_{z'',t}\big),
\]
where $\kal{S}\subseteq\kal{P}\times\kal{P}$ is the set of couples $(z',z'')$ such that $[s,z']$, $[z',z'']$ and $[z'',t]$ are intervals of length $1$, and $\alpha_{z',z''}\in \Bbbk$. Clearly, $\kal{S}=\{(v,y),(x,u)\}$.  Thus:
\[
 \omega =\alpha_{v,y}(e_{s,v}\ot e_{v,t}-e_{s,y}\ot e_{y,t})+\alpha_{x,u}(e_{s,x}\ot e_{x,t}-e_{s,u}\ot e_{u,t}),
\]
relation that obviously leads us to a contradiction, since the four tensor monomials fom the right-hand side of the above equation are linearly independent over $\Bbbk$.
\end{fact}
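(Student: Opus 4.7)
The fact \ref{fa:tailins} makes two substantive claims: (a) each tile $\mathcal{T}$ is a Koszul poset, and (b) not every planar tiling is Koszul, the poset of Figure \ref{fig:non-koszul} being an explicit witness. My plan for (a) is to run the algorithm of \S\ref{fa:algorithm} starting from a one-point poset; for (b) it is to produce an element of $\Od{2}{A,3}$ that is a cycle but not a boundary, then invoke Theorem \ref{thm:kring} (equivalence of (1) and (6)) to conclude non-Koszulity.

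For (a), start with $\kal{Q}_0=\{s_\mathcal{T}\}$, trivially Koszul since its incidence ring is $\Bbbk$. Apply step (1) of the algorithm with $u=s_\mathcal{T}$ to adjoin $u_\mathcal{T}$ as a successor of $s_\mathcal{T}$; apply step (1) again, still with $u=s_\mathcal{T}$, to adjoin $v_\mathcal{T}$ as a second successor of $s_\mathcal{T}$, incomparable with $u_\mathcal{T}$. Each invocation of (1) is Theorem \ref{te:aplicatie} applied to a singleton $\kal{F}$, for which $(\dagger)$ holds vacuously. Finally, apply step (3) with $\kal{F}=\{u_\mathcal{T},v_\mathcal{T}\}$ to adjoin $t_\mathcal{T}$ on top. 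Here $(\dagger)$ is satisfied: $s_\mathcal{T}$ is a common predecessor of $u_\mathcal{T}$ and $v_\mathcal{T}$ in the current poset, and since no other element sits below both, $s_\mathcal{T}=\inf\{u_\mathcal{T},v_\mathcal{T}\}$. Theorem \ref{te:aplicatie} then delivers Koszulity of $\mathcal{T}$.

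For (b), set $\omega=e_{s,y}\otimes e_{y,t}-e_{s,x}\otimes e_{x,t}\in\Od{2}{A,3}$, using the vertex labels of Figure \ref{fig:non-koszul}. The formula $d_2(e_{a,b}\otimes e_{b,c})=e_{a,c}$ from \S\ref{sec-t(a)} yields $d_2(\omega)=e_{s,t}-e_{s,t}=0$, so $\omega$ is a $2$-cycle of internal degree $3$.

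The main obstacle is to argue that $\omega$ bounds nothing. Any preimage under $d_3$ must live in $\Od{3}{A,3}$, which by definition is spanned by tensors $e_{s,z'}\otimes e_{z',z''}\otimes e_{z'',t}$ for which each of $[s,z']$, $[z',z'']$ and $[z'',t]$ has length one. Reading off the Hasse diagram of Figure \ref{fig:non-koszul}, I would catalogue all such maximal $3$-chains from $s$ to $t$ and verify that only $(s,v,y,t)$ and $(s,x,u,t)$ qualify. Expanding $d_3$ on the general linear combination $\alpha_{v,y}e_{s,v}\otimes e_{v,y}\otimes e_{y,t}+\alpha_{x,u}e_{s,x}\otimes e_{x,u}\otimes e_{u,t}$ via $d_3(e_{a,b}\otimes e_{b,c}\otimes e_{c,d})=e_{a,c}\otimes e_{c,d}-e_{a,b}\otimes e_{b,d}$ produces a four-term expression in the $\Bbbk$-linearly independent monomials $e_{s,v}\otimes e_{v,t}$, $e_{s,y}\otimes e_{y,t}$, $e_{s,x}\otimes e_{x,t}$, $e_{s,u}\otimes e_{u,t}$ of $\Od{2}{A,3}$. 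Matching coefficients against $\omega$ is immediately inconsistent: the coefficient of $e_{s,v}\otimes e_{v,t}$ in $\omega$ is zero, forcing $\alpha_{v,y}=0$, yet matching the coefficient of $e_{s,y}\otimes e_{y,t}$ forces $\alpha_{v,y}=1$. Hence $[\omega]\neq 0$ in $\Tor_{2,3}^A(R,R)$, and by Theorem \ref{thm:kring} the incidence ring $A$ of this planar tiling is not Koszul.
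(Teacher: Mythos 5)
Your proposal is correct and takes essentially the same route as the paper: the identical three-step run of the algorithm (construction (1) twice with singleton $\kal{F}$, then construction (3) with $\kal{F}=\{u_\mathcal{T},v_\mathcal{T}\}$, where $(\dagger)$ holds because $s_\mathcal{T}=\inf\{u_\mathcal{T},v_\mathcal{T}\}$), and the identical obstruction cycle $\omega$ with $\kal{S}=\{(v,y),(x,u)\}$ ruled out as a boundary by the linear independence of the four tensor monomials $e_{s,v}\ot e_{v,t}$, $e_{s,y}\ot e_{y,t}$, $e_{s,x}\ot e_{x,t}$, $e_{s,u}\ot e_{u,t}$. Your explicit verification that $d_2(\omega)=0$ and your coefficient matching (with the sign-correct expansion of $d_3$, which the paper's displayed formula gets only up to sign) merely spell out steps the paper asserts, so nothing is missing.
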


\begin{fact}[Koszul planar tilings.]
The union of a Koszul planar tiling $\kal{Q}$ and a tile $\mathcal{T}$ is Koszul as well, provided that we impose additional conditions on the set $\kal{Q}\cap\mathcal{T}$. 

We will consider here four types of sufficient conditions that guarantee the Koszulity of $\kal{P}=\kal{Q}\cup\mathcal{T}$.\vspace*{1mm}

(a) \textit{The case} $\kal{Q}\cap\mathcal{T}=\emptyset.$ Since the intersection of $\kal{Q}$ and $\mathcal{T}$ is empty, we have  $\Bbbk^a[\kal{P}]=\Bbbk^a[\kal{Q}]\times \Bbbk^a[\mathcal{T}]$. Thus, by Proposition \ref{suma_koszul}, $\Bbbk^a[\kal{P}]$ is Koszul.\vspace*{1mm}

(b) \textit{The case} $|\kal{Q}\cap\mathcal{T}|=1.$
Let us assume that  $\kal{Q}\cap\mathcal{T}=\{u_\mathcal{T}\}$. Using the algorithm, one adds successively the missing elements of $\mathcal{T}$, so that at every step one obtains a new Koszul poset. More precisely, one first applies  the construction (1) of the algorithm to add ${t_\mathcal{T}}$ to $\kal{Q}$. Then, one adds $v_\mathcal{T}$ using the construction (3). Finally, one takes $\kal{F}:=\{u_\mathcal{T},v_\mathcal{T}\}$ and uses (4) to patch completely the tile $\mathcal{T}$, by adjoining $s_\mathcal{T}$. Note that the elements in $\kal{F}$ have a unique successor, namely $t_\mathcal{T}$. Henceforth, the condition $(\ddagger)$ is trivially fulfilled for this particular choice of $\kal{F}$.  

If  $\kal{Q}\cap\mathcal{T}:=\{t_\mathcal{T}\}$, then  we first  add $u_\mathcal{T}$ and $v_\mathcal{T}$ to $\kal{Q}$,
using in both cases (3), and then we apply (4) as above. To handle  the cases when $\kal{Q}\cap\mathcal{T}$ is either  $\{v_\mathcal{T}\}$ or $\{s_\mathcal{T}\}$, one proceeds in a similar way. \vspace*{1mm}

(c) \textit{The case} $\kal{Q}\cap\mathcal{T}=\{x,y\}$, where $x$ is a predecessor of $y$ in $\mathcal{T}$. Such a set uniquely determines an edge of the Hasse diagram of $\mathcal{T}$. The fact that $\kal{P}$ is Koszul can be proved using the same method, independently of the choice of $\{x,y\}$. So we only discuss the case when  $x=u_\mathcal{T}$ and $y=t_\mathcal{T}$. First,  using (3), we adjoin the element $v_\mathcal{T}$ to $\kal{Q}$. Then we can repeat the last step from the preceding case to add the remaining element $s_\mathcal{T}$. \vspace*{1mm}  

(d) \textit{The case} $\kal{Q}\cap\mathcal{T}=\{u_\mathcal{T},v_\mathcal{T},x\}$, where $x$ is either $t_\mathcal{T}$ or $s_\mathcal{T}$.  If  $x=t_\mathcal{T}$ then $s_\mathcal{T}$ can be added to $\kal{Q}$ taking $\kal{F}:=\{u_\mathcal{T},v_\mathcal{T}\}$ and using once again (4).  
We have to show  that $\kal{F}$ satisfies the condition $(\ddagger)$. To simplify the notation, we will denote the elements of $\kal{Q}\cap\mathcal{T}$ by $u$, $v$ and $t$. We consider the maximal sequences of tiles $\{\mathcal{T}_1,\mathcal{T}_2,\dots,\mathcal{T}_n\}$ and  $\{\mathcal{T}_1',\mathcal{T}_2',\dots,\mathcal{T}_m'\}$, as in Figure~\ref{fig:sup}.

Keeping the notation from this picture, and assuming that  $x$ in an element in $\kal{Q}$ greater than or equal to $v$, then either $x\geq t$ or there exists $i\in\{0,\dots,m\}$ so that $x=v_i$.  The elements  $x\geq u$ can be characterized in a similar way. Thus an upper bound  $x$ of $\{u,v\}$ must be greater than or equal to $t$, so $t=\sup\{u,v\}$. In particular, a set $\kal{F}$ as above must satisfy the condition $(\ddagger)$. The dual case can be managed  analogously.\begin{figure}[h]  
 \[
\includegraphics{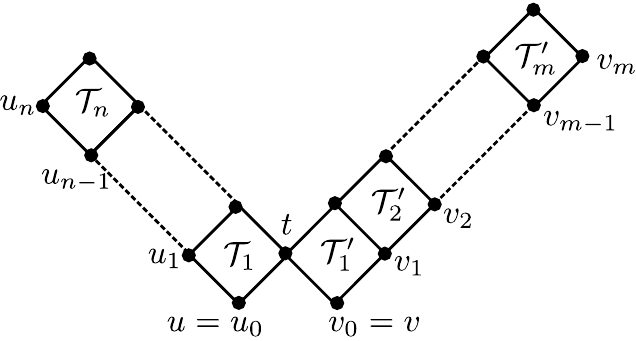}    
\]
\caption{Maximal sequences of tiles sharing one vertex} 
\label{fig:sup}
\end{figure}
\end{fact}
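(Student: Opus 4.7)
The plan is to verify, for each of the four cases (a)--(d), that the tiling $\kal{P}=\kal{Q}\cup\mathcal{T}$ can be built up from $\kal{Q}$ by a finite sequence of elementary adjunctions falling under the algorithm of \S\ref{fa:algorithm}. Once every missing vertex of $\mathcal{T}$ has been adjoined via one of the constructions (1)--(4), the output $\kal{P}$ is automatically Koszul by that algorithm (which itself rests on Theorems \ref{te:aplicatie} and \ref{te:aplicatie_dual}). Thus the real work is to choose the right order of adjunctions and to check the combinatorial hypotheses $(\dagger)$ or $(\ddagger)$ at every step.

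For case (a), since $\kal{Q}\cap\mathcal{T}=\emptyset$, no element of $\kal{Q}$ is comparable with any element of $\mathcal{T}$, so the incidence ring splits as a direct product $\Bbbk^{a}[\kal{P}]\cong\Bbbk^{a}[\kal{Q}]\times\Bbbk^{a}[\mathcal{T}]$. The tile $\mathcal{T}$ was shown to be Koszul at the beginning of \S\ref{fa:tailins}, so Proposition \ref{suma_koszul} yields Koszulity of $\kal{P}$. For case (b), three vertices of $\mathcal{T}$ have to be inserted. In the representative subcase $\kal{Q}\cap\mathcal{T}=\{u_{\mathcal{T}}\}$, I would first adjoin $t_{\mathcal{T}}$ by construction (1) (as a new successor of $u_{\mathcal{T}}$), then adjoin $v_{\mathcal{T}}$ by construction (3) with the singleton $\kal{F}=\{t_{\mathcal{T}}\}$ (for which $(\dagger)$ is trivially satisfied), and finally adjoin $s_{\mathcal{T}}$ by construction (4) with $\kal{F}=\{u_{\mathcal{T}},v_{\mathcal{T}}\}$; the condition $(\ddagger)$ holds here because the only common successor of $u_{\mathcal{T}}$ and $v_{\mathcal{T}}$ in the intermediate poset is the freshly adjoined $t_{\mathcal{T}}$. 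The three analogous subcases (shared vertex $s_{\mathcal{T}}$, $v_{\mathcal{T}}$ or $t_{\mathcal{T}}$) are symmetric. Case (c) proceeds similarly: for $\kal{Q}\cap\mathcal{T}=\{u_{\mathcal{T}},t_{\mathcal{T}}\}$ one adjoins $v_{\mathcal{T}}$ via construction (3) with $\kal{F}=\{t_{\mathcal{T}}\}$, and then repeats the last step of case (b).

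The substantive step is case (d), where $\kal{Q}$ already contains both $u_{\mathcal{T}}$ and $v_{\mathcal{T}}$ together with one of $s_{\mathcal{T}}$, $t_{\mathcal{T}}$, and only the remaining extremum must be adjoined. Assume for definiteness that the missing vertex is $s_{\mathcal{T}}$, so we would like to use construction (4) with $\kal{F}=\{u_{\mathcal{T}},v_{\mathcal{T}}\}$. To invoke Theorem \ref{te:aplicatie_dual} via this step of the algorithm one must verify the condition $(\ddagger)$, i.e.\ exhibit a common successor $s$ of $u_{\mathcal{T}}$ and $v_{\mathcal{T}}$ with $s=\sup\{u_{\mathcal{T}},v_{\mathcal{T}}\}$ in $\kal{Q}$. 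The natural candidate is $s=t_{\mathcal{T}}$, and to justify this choice I would classify, as suggested by Figure \ref{fig:sup}, the maximal chains of tiles of $\kal{Q}$ issuing upward from $u_{\mathcal{T}}$ and from $v_{\mathcal{T}}$. Using planarity at non-extremal vertices, any $x\in\kal{Q}$ with $x\geq v_{\mathcal{T}}$ must either satisfy $x\geq t_{\mathcal{T}}$ or lie on the maximal chain $v_{\mathcal{T}}<v_{1}<v_{2}<\cdots$ issuing from $v_{\mathcal{T}}$ on the far side of $t_{\mathcal{T}}$, and analogously for $u_{\mathcal{T}}$. An element comparable above both $u_{\mathcal{T}}$ and $v_{\mathcal{T}}$ therefore cannot lie on either lateral chain, and is forced to dominate $t_{\mathcal{T}}$. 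Hence $t_{\mathcal{T}}=\sup\{u_{\mathcal{T}},v_{\mathcal{T}}\}$, the condition $(\ddagger)$ holds, and construction (4) applies. The dual subcase ($x=s_{\mathcal{T}}$) is handled by passing to the opposite order and invoking $(\dagger)$ with the same argument.

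The only genuine obstacle is this last combinatorial check in case (d): verifying $(\ddagger)$/$(\dagger)$ requires knowing that the lateral chains around $u_{\mathcal{T}}$ and $v_{\mathcal{T}}$ cannot produce a common upper bound that avoids $t_{\mathcal{T}}$, which is precisely where the planar structure of the tiling enters. All other steps are routine applications of the algorithm once the order of adjunction is fixed.
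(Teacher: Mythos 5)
Your proposal is correct and follows essentially the same route as the paper: case (a) via the product decomposition and Proposition \ref{suma_koszul} (together with the Koszulity of a single tile established earlier), cases (b)--(c) by the same sequences of elementary adjunctions from the algorithm in \S\ref{fa:algorithm}, and case (d) by the identical lateral-chain analysis of Figure \ref{fig:sup} showing that $t_\mathcal{T}=\sup\{u_\mathcal{T},v_\mathcal{T}\}$, so that $(\ddagger)$ holds. The only quibble, which you share with the paper's own wording, is that adjoining $v_\mathcal{T}$ as a \emph{predecessor} of $t_\mathcal{T}$ is an instance of construction (2), or of (4) with the singleton $\kal{F}=\{t_\mathcal{T}\}$, rather than of construction (3), which would place $v_\mathcal{T}$ \emph{above} $t_\mathcal{T}$; this labeling slip is immaterial to the argument.
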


\begin{fact}[An example of planar  tiling.]
To illustrate  the construction of  planar tilings we consider the Hasse diagram from Figure~\ref{fig:tiling}. 
\begin{figure}[h]
 \[
\includegraphics{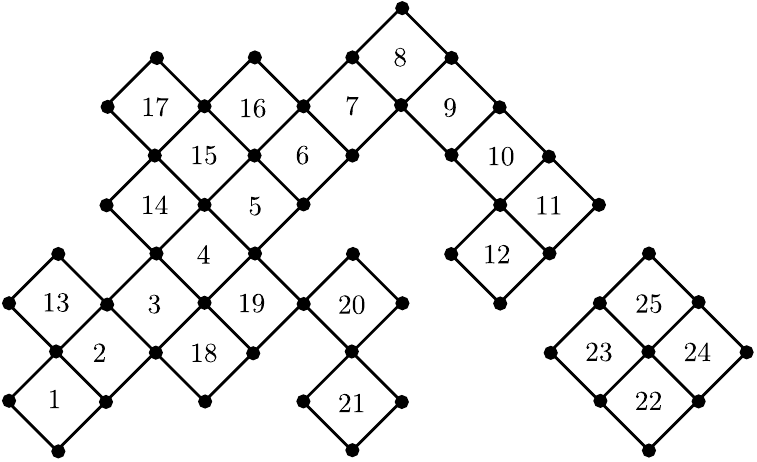}     
\]
\caption{Planar tiling}
\label{fig:tiling}
\end{figure}

One of the possible ways to show that it represents the Hasse diagram of a Koszul poset is to patch step by step the tiles 1 through 8, as in the case (c). Similarly, we can add  the tiles 9 up to 19. 
The adjoining of tiles 20 and 21 follows the case (b), since they intersect the poset previously constructed in one element. On the other hand,
the tile 22 can be patched as having empty intersection with the other component of the poset, while for the tiles 23 and 24 we use the case (c) once again. Finally we add the tile 25 as in (d). 
\end{fact}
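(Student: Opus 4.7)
The plan is to verify, step by step along the tile numbering of Figure~\ref{fig:tiling}, that the prescribed sequence of adjunctions falls under one of the four cases (a)--(d) of the previous fact, each of which invokes the appropriate construction from the algorithm in \S\ref{fa:algorithm}. At every step $k$, I denote by $\kal{Q}_k$ the poset built out of tiles $1,\dots,k$ and by $\mathcal{T}_{k+1}$ the next tile to be attached. The overall strategy is a finite induction: assuming $\kal{Q}_k$ is Koszul, I will identify $\kal{Q}_k\cap \mathcal{T}_{k+1}$, classify it into one of the cases (a)--(d), and appeal to the corresponding sub-argument to conclude that $\kal{Q}_{k+1}:=\kal{Q}_k\cup\mathcal{T}_{k+1}$ is Koszul. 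Starting from $\kal{Q}_0=\emptyset$ (trivially Koszul, or equivalently from the singleton poset, which is Koszul because its incidence algebra is semisimple), this yields the Koszulity of the final planar tiling, whose incidence $R$-ring is precisely $\Bbbk^a[\kal{Q}_{25}]$.

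The first tile is handled as case (a) (empty intersection), namely by Proposition~\ref{suma_koszul}; equivalently, one can realize tile $1$ alone via four iterations of the algorithm as done in \S\ref{fa:tailins} for a single tile. For tiles $2$ through $8$, I will verify by inspection of Figure~\ref{fig:tiling} that each one meets the already-built poset in exactly one edge of its square, i.e.\ in a set of the form $\{x,y\}$ with $x$ a predecessor of $y$ inside the new tile; this places the adjunction in case (c). The identical verification applies to the horizontal strip consisting of tiles $9$ through $19$. Tiles $20$ and $21$ are each attached to $\kal{Q}_{19}$ (respectively $\kal{Q}_{20}$) at a single vertex, which is case (b); the four sub-cases of (b) correspond to which of $s_{\mathcal{T}},u_{\mathcal{T}},v_{\mathcal{T}},t_{\mathcal{T}}$ lies in the intersection, but all four are treated by the same two-step use of constructions (3) and (4) from the algorithm.

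For tile $22$, the verification is that its four vertices are all new, so case (a) and Proposition~\ref{suma_koszul} apply immediately; at this stage $\kal{Q}_{22}$ is a disjoint union of two already-Koszul planar tilings. Tiles $23$ and $24$ then reconnect the two components, each intersecting the existing poset in a single edge, so case (c) applies again. Finally, for tile $25$ I must check that $\kal{Q}_{24}\cap \mathcal{T}_{25}$ consists of three vertices of the square, either the two ``side'' vertices together with the top, or together with the bottom, which places the step in case (d); the missing vertex $s_{\mathcal{T}_{25}}$ (resp.\ $t_{\mathcal{T}_{25}}$) is then adjoined via construction (4) (resp.\ (3)) of the algorithm.

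The main obstacle, and the only nontrivial point, lies in case (d): before applying construction (4) with $\kal{F}=\{u_{\mathcal{T}},v_{\mathcal{T}}\}$ one must check condition $(\ddagger)$ within the current poset $\kal{Q}_{24}$, that is, $t_{\mathcal{T}_{25}}=\sup\{u_{\mathcal{T}_{25}},v_{\mathcal{T}_{25}}\}$ holds globally, not just inside the tile. This is exactly the supremum computation performed in case (d) of \S\ref{fa:tailins}, where one enumerates the maximal chains of tiles sharing the vertex $v_{\mathcal{T}_{25}}$ (resp.\ $u_{\mathcal{T}_{25}}$) as in Figure~\ref{fig:sup}. I will carry out this enumeration in Figure~\ref{fig:tiling} and confirm that any common upper bound of $u_{\mathcal{T}_{25}}$ and $v_{\mathcal{T}_{25}}$ in $\kal{Q}_{24}$ must dominate $t_{\mathcal{T}_{25}}$; a symmetric check, with infima in place of suprema, validates the uses of $(\dagger)$ in cases (b) and (c) as well. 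Once these combinatorial verifications are in place at each of the twenty-five stages, the induction closes and $\kal{Q}_{25}$ is Koszul.
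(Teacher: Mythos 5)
Your proposal is correct and follows essentially the same route as the paper: a tile-by-tile induction classifying each intersection $\kal{Q}_k\cap\mathcal{T}_{k+1}$ into the cases (a)--(d) of the preceding paragraph (tiles 1--19 via (c), tiles 20--21 via (b), tile 22 via (a) and Proposition~\ref{suma_koszul}, tiles 23--24 via (c), and tile 25 via (d), with condition $(\ddagger)$ checked globally by the supremum enumeration of Figure~\ref{fig:sup}). Your only additions --- making the inductive bookkeeping explicit and over-verifying $(\dagger)$/$(\ddagger)$ in cases (b) and (c), where the paper notes they hold trivially --- are harmless refinements of the same argument.
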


\begin{fact}[Nested Diamonds.] In a similar way we get a new Koszul poset, the `nested vertical diamonds', whose Hasse diagram is depicted in the first picture of Figure~\ref{fig:diamonds}.
Start with the trivial poset $\{s\}$ and adjoin the elements $u_1,\dots,u_n$ using the  construction (1) from the algorithm. Thus $s$ is the infimum of each couple $u_i$ and $u_j$ and these elements are not comparable in the resulting poset. Hence we can apply (3), taking $\kal{F}=\{u_1,u_2,\dots,u_n\}$.   

As a last example we consider the poset $\kal{P}_{i,j}$, the `nested horizontal diamonds', represented in the second picture of Figure~\ref{fig:diamonds}. If $i,j>1$, then $\kal{P}_{i,j}$ cannot be constructed using our algorithm, as the infimum and the supremum of  $\kal{F}=\{u,v\}$ do not exist, so the conditions $(\dagger)$ and  $(\ddagger)$ do not hold.
\begin{figure}[h]
\[
 \includegraphics{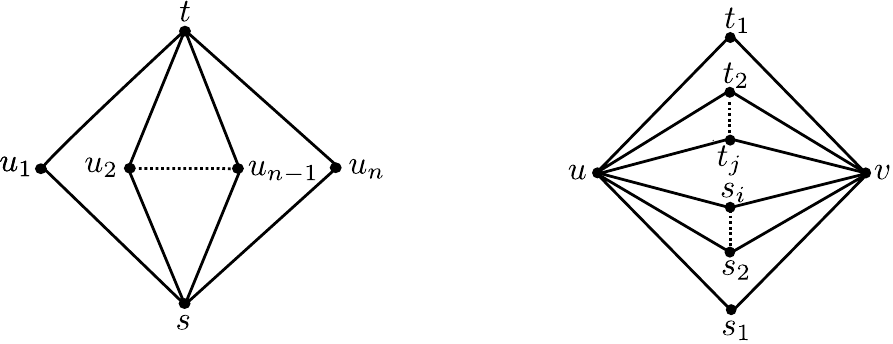}  
\]
\caption{Vertical nested diamonds and horizontal nested diamonds.}\label{fig:diamonds} 
\end{figure}
However, if $\Bbbk$ is a field of characteristic zero, then  $\kal{P}_{i,j}$ is a Koszul poset. To see this, we will prove that $\Bbbk^a[\kal{P}_{i,j}]$ is  a braided symmetric $R$-bialgebra. For the definition of braided $R$-bialgebras in general and of symmetric braided bialgebras in particular, the reader is referred to  \cite[\S6.1]{jps}.  

Let $V$ be the $R$-bimodule generated by all elements $e_{x,y}$ of the canonical basis of $\Bbbk^a[\kal{P}_{i,j}]$, such that $[x,y]$ is an interval of length one. Hence, the set $\{e_{s_p,u}\ot_Re_{u,t_q},e_{s_p,v}\ot_Re_{v,t_q}\mid p\leq i\text{ and } q\leq j\}$ is a basis of $V \ot_R V$ as a linear space. Moreover, since there are no maximal chains of length greater than $2$, it follows that  $V^{\ot_R l}=0$, for all $l>2$. Further, we define the $R$-bilinear braiding $\mathfrak{c} : V \ot_R V \to V \ot_RV$ such that it `interchanges' the chains of length two that correspond to the same $p$ and $q$. More precisely, for all $p\leq i<q$, we have:
\[
 \mathfrak{c}(e_{s_p,u} \ot_R e_{u,t_q})=e_{s_p,v} \ot_R e_{v,t_q}\qquad\text{and}\qquad \mathfrak{c}(e_{s_p,v} \ot_R e_{v,t_q})=e_{s_p,u} \ot_R e_{u,t_q}.
\] 
The linear map $\mathfrak{c}$  satisfies trivially the braid equation, because $V^{\ot_R 3} =0$. Note that  $\mathfrak{c}^2=\Id_{V \ot_R V}$.

Let $I$ denote the ideal generated by the image of $\Id_{V \ot_R V} - \mathfrak{c}$. By definition,  $S_R(V,\mathfrak{c}):=T^a_R(V)/I$. Now we can prove the isomorphism $\Bbbk^a[\kal{P}_{i,j}]\cong S_R(V,\mathfrak{c})$ remarking that the $R$-ring $\Bbbk^a[\kal{P}_{i,j}]$ coincides with the quotient of $T_R^a(V)$ modulo the ideal generated by the differences $e_{s_p,u}\ot_Re_{u,t_q}-e_{s_p,v}\ot_Re_{v,t_q}$, where $p\leq i$ and $q\leq j$.  By \cite[Theorem 6.2]{jps}, it follows that $\Bbbk^a[\kal{P}_{i,j}]$ is a Koszul braided $R$-bialgebra. 
\end{fact}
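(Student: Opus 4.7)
For the vertical nested diamonds, the plan is to apply the algorithm directly. Start with the Koszul poset $\kal{Q}=\{s\}$, and adjoin $u_1,\ldots,u_n$ one at a time via construction (1): at each step the newly added $u_k$ is declared a successor of $s$ only, so the singleton set of predecessors trivially satisfies the conditions needed. After these $n$ applications, the intermediate poset is Koszul by Theorem \ref{te:aplicatie_dual}, the $u_k$'s are pairwise incomparable, and $s=\inf\{u_p,u_q\}$ for all $p\neq q$. Finally, adjoin the top element via construction (3) with $\kal{F}=\{u_1,\ldots,u_n\}$; the condition $(\dagger)$ is witnessed by the common lower bound $s$. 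Theorem \ref{te:aplicatie} yields Koszulity of the resulting vertical nested diamonds poset.

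For the non-constructibility of $\kal{P}_{i,j}$ with $i,j>1$, the key observation is that constructions (1) and (3) always adjoin an element that is maximal in the output poset, while (2) and (4) always adjoin a minimal one. Hence if $\kal{P}_{i,j}$ were obtainable by the algorithm, the last adjoined element would have to be maximal or minimal in $\kal{P}_{i,j}$, hence equal to some $t_q$ or some $s_p$ (neither $u$ nor $v$ is extremal in $\kal{P}_{i,j}$). If the last adjoined element is $t_q$, construction (1) fails because $t_q$ has two predecessors $u$ and $v$, and construction (3) would require $\kal{F}=\{u,v\}$ to satisfy $(\dagger)$; however in $\kal{P}_{i,j}\setminus\{t_q\}$ the common predecessors of $u$ and $v$ are the pairwise incomparable elements $s_1,\ldots,s_i$, which admit no infimum when $i>1$. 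Dually, if the last adjoined element is $s_p$, construction (2) fails since $s_p$ has two successors $u,v$, while (4) requires $\sup\{u,v\}$ to exist in $\kal{P}_{i,j}\setminus\{s_p\}$, contradicting $j>1$ by the symmetric argument on the $t_q$'s.

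For the Koszulity of $\kal{P}_{i,j}$ in characteristic zero, the strategy is to identify $A:=\Bbbk^a[\kal{P}_{i,j}]$ with the symmetric braided $R$-bialgebra $S_R(V,\mathfrak{c})$ and invoke \cite[Theorem 6.2]{jps}. Take $V$ to be the $R$-subbimodule of $A$ spanned by the $e_{x,y}$ with $l([x,y])=1$; a direct computation shows that the elements $e_{s_p,u}\otimes_R e_{u,t_q}$ and $e_{s_p,v}\otimes_R e_{v,t_q}$ form a basis of $V\otimes_R V$ of dimension $2ij$, and since $\kal{P}_{i,j}$ admits no chain of length greater than two, one has $V^{\otimes_R l}=0$ for $l\geq 3$. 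Defining $\mathfrak{c}$ as the swap $u\leftrightarrow v$ on this basis makes $\mathfrak{c}^2=\Id$ immediate, and the braid equation on $V^{\otimes_R 3}=0$ holds trivially. The canonical $R$-ring map $T_R^a(V)\to A$ extending $V\hookrightarrow A$ is surjective (since $A$ is generated as an $R$-ring by $V$) and kills the ideal $I$ generated by $\mathrm{Im}(\Id-\mathfrak{c})$, because in $A$ one has $e_{s_p,u}e_{u,t_q}=e_{s_p,t_q}=e_{s_p,v}e_{v,t_q}$. The isomorphism $A\cong S_R(V,\mathfrak{c})$ then follows by a dimension count: $(V\otimes_R V)/\mathrm{Im}(\Id-\mathfrak{c})$ has dimension $ij$, matching the number of length-two intervals, while the degree-$0$ and degree-$1$ pieces match tautologically. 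The main obstacle is precisely this dimension count, together with the verification that the resulting structure is a symmetric braided $R$-bialgebra in the sense required by \cite[Theorem 6.2]{jps}; granted these, Koszulity of $A$ in characteristic zero follows at once.
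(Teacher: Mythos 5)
Your proposal is correct and takes essentially the same route as the paper on all three points: the vertical diamonds via constructions (1) and (3) of the algorithm with $(\dagger)$ witnessed by $s$, the non-constructibility of $\kal{P}_{i,j}$ via the failure of $\inf\{u,v\}$ and $\sup\{u,v\}$ (your extremality analysis of the last adjoined element is just a fleshed-out version of the paper's one-line remark), and the identification $\Bbbk^a[\kal{P}_{i,j}]\cong S_R(V,\mathfrak{c})$ followed by \cite[Theorem 6.2]{jps}, where your dimension count $\dim\big((V\ot_R V)/\Imm(\Id-\mathfrak{c})\big)=ij$ is a harmless supplement to the paper's direct presentation of the incidence ring as $T_R^a(V)$ modulo the differences $e_{s_p,u}\ot_R e_{u,t_q}-e_{s_p,v}\ot_R e_{v,t_q}$. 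One small slip: construction (1) adjoins a \emph{maximal} element, so the Koszulity of the intermediate poset follows from Theorem \ref{te:aplicatie} (singleton $\kal{F}$ trivially satisfies $(\dagger)$), not from Theorem \ref{te:aplicatie_dual}, which governs the dual case of adjoining a minimal element.
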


\begin{remark}
In \cite[Definition 4.6]{wood}, the author defines a graded poset $\Omega$ as being \emph{exactly thin} whenever $x < y$ and $l(x,y)=2$ imply that the interval $(x,y)$ consists of precisely two elements. Note that what we termed `planar tilings' are examples of such posets, which we proved to be Koszul. Furthermore, our `nested horizontal posets' are also exactly thin and Koszul.
\end{remark}

\section{Acknowledgements}

The first named author was financially supported by the project POSDRU/159/1.5/S/137750 of the Ministry of Education-OIPOSDRU.  The second named author was financially supported by CNCS-UEFISCDI, project  PCE PN-II-ID-PCE-2011-3-0635.

\end{document}